\documentclass{amsart}

	\usepackage[utf8]{inputenc}

	\usepackage{amsmath,amsfonts,amssymb,amsthm}

	\usepackage[T1]{fontenc}

	\usepackage{etoolbox}
	\patchcmd{\section}{\scshape}{\scshape\bfseries}{}{}
	\makeatletter
	\renewcommand{\@secnumfont}{\scshape\bfseries}
	\makeatother

	\let\epsilon\varepsilon

	\usepackage[indent=10pt]{parskip}

	\usepackage[bookmarks=true,bookmarksopen=true]{hyperref}

	\numberwithin{equation}{section}
    
    \newtheorem{theorem}[equation]{Theorem}

	\newtheorem{lemma}[equation]{Lemma}
	\newtheorem{proposition}[equation]{Proposition}
		
	\theoremstyle{plain}
	\newtheorem{result}{Theorem}
	
	\newtheorem{corollaryresult}[result]{Corollary}

	\theoremstyle{definition}

	\theoremstyle{remark}
	\newtheorem{remark}[equation]{Remark}
	
	\numberwithin{table}{section}

	\usepackage{tikz}
	\usetikzlibrary{matrix}

	\usepackage{todonotes}

	\usepackage{enumerate}

	\usepackage{float}

	\makeatletter
	\def\blfootnote{\gdef\@thefnmark{}\@footnotetext}
	\makeatother

	\DeclareMathOperator{\diag}{diag}

	\DeclareMathOperator{\Ric}{Ric}
        \DeclareMathOperator{\Scal}{Scal}

	\DeclareMathOperator{\trace}{trace}

	\newcommand{\of}[1]{\left( #1 \right)}

	\newcommand{\gG}{\mathsf{G}}
	
	\newcommand{\gH}{\mathsf{H}}

	\newcommand{\SO}{\mathsf{SO}}

	\newcommand{\Spin}{\mathsf{Spin}}

	\newcommand{\fg}{\mathfrak{g}}

	\newcommand{\fm}{\mathfrak{m}}
	
	\newcommand{\fp}{\mathfrak{p}}
	
	\newcommand{\so}{\mathfrak{so}}

	\newcommand{\bO}{\mathbb{O}}

	\newcommand{\bR}{\mathbb{R}}

	\mathcode`l="8000
	\begingroup
	\makeatletter
	\lccode`\~=`\l
	\DeclareMathSymbol{\lsb@l}{\mathalpha}{letters}{`l}
	\lowercase{\gdef~{\ifnum\the\mathgroup=\m@ne \ell \else \lsb@l \fi}}%
	\endgroup

\usepackage[foot]{amsaddr}

\title[$\gG/\gH$ with two isotropy summands develops $\Ric>0$ under Ricci flow]{Homogeneous spaces with two equivalent isotropy summands develop positive Ricci curvature under Ricci flow}

\author[E. Cochran]{Eric Cochran$^1$}
\address{Department of Mathematics, University of California, Irvine, USA$^1$}
\email{ecochra2@uci.edu}

\author[A. Mingajev]{Arseny Mingajev$^{2}$}
\address{Department of Mathematics, Trinity University, San Antonio, TX, USA$^2$}
\email{amingaje@trinity.edu}

\author[L. Mouillé]{Lawrence Mouillé$^2$}
\email{lmouille@trinity.edu}

\author[N. Valiyakath]{Nazia Valiyakath$^3$}
\address{Department of Mathematics, University of Rochester, Rochester, NY, USA$^3$}
\email{naziavaliyakath@gmail.com}

\subjclass[2020]{53E20, 53C20, 53C21, 53C30}

\date{\today}

\begin{document}

\begin{abstract}
    We study normalized Ricci flow on simply connected homogeneous spaces $\gG/\gH$ for which the isotropy representation splits into exactly two equivalent irreducible subrepresentations.
    We prove that every $\gG$-invariant metric evolves to one with positive Ricci curvature, and that the family of $\gG$-invariant metrics with positive Ricci curvature is forward-invariant under the flow. 
    The proof relies on the fact that the phase portrait of the family of fixed-volume $\gG$-invariant metrics can be explicitly visualized. 
\end{abstract}

\maketitle

\section{Introduction}


Ricci flow, introduced by Hamilton in \cite{Hamilton82}, constitutes a powerful geometric evolution equation for deforming Riemannian metrics. 
Given a Riemannian manifold $(M,g_0)$, the flow is defined by the partial differential equation
\[
    \frac{\partial}{\partial t} g(t) = -2 \Ric_{g(t)}, \qquad g(0) = g_0.
\]
Heuristically, much like the heat equation, Ricci flow is expected to have a regularizing effect, evolving a given Riemannian metric to some \textit{canonical} metric on $M$.  
A general goal is to then draw topological or geometric conclusions from the existence of such a metric.
When combined with Perelman’s monotonicity functionals and entropy techniques, Ricci flow has been instrumental in resolving several long-standing conjectures in geometry and topology, most notably the Poincaré and Geometrization Conjectures.

Ricci flow is known to preserve several important curvature conditions, including nonnegative scalar curvature \cite{Hamilton82}, positive-semidefinite curvature operator \cite{Hamilton86}, and nonnegative isotropic curvature \cite{BrendleSchoen09,Hamilton97,Nguyen10}.
Brendle and Schoen subsequently used this last property in their ground-breaking proof of the Differentiable Sphere Theorem in \cite{BrendleSchoen09}.
A unified approach to proving invariance of nonnegative curvature conditions was developed by Wilking in \cite{Wilking13}
However, not all lower curvature bounds are preserved under the flow; see the work of Ni \cite{Ni04}, B\"ohm and Wilking \cite{BohmWilking07}, M\'aximo \cite{Maximo11,Maximo14}, and Bettiol and Krishnan \cite{BettiolKrishnan19,BettiolKrishnan23}.

An important property of Ricci flow is that it preserves the symmetries of the initial metric $g_0$.
This property follows from the fact that the Ricci tensor is equivariant (or natural) with respect to pullbacks by diffeomorphisms of the manifold. 
Consequently, on homogeneous spaces $\gG/\gH$, Ricci flow reduces to a system of ordinary differential equations when restricted to the the space of $\gG$-invariant metrics. 
In cases that the isotropy representation decomposes into few irreducible subrepresentations (isotropy summands), the space of $\gG$-invariant metrics is low-dimensional.
Furthermore, by considering normalized Ricci flow given by
\[
    \frac{\partial}{\partial t} g(t) = -2 \Ric_{g(t)} + \frac{2 \Scal}{\dim (M)} g(t), \qquad g(0)=g_0,
\]
the family of $\gG$-invariant metrics with normalized volume is preserved under the flow.
Restricting to such a family of metrics produces an even lower-dimensional system to analyze, and the dynamics of normalized Ricci flow on this entire space of metrics can be completely visualized.

This reduction to low-dimensional ODE systems has been exploited by several authors to study the preservation of curvature conditions.
Abiev and Nikonorov investigated the preservation of positive sectional and Ricci curvature on Wallach spaces in \cite{AbievNikonorov16}, 
Abiev studied Stiefel manifolds $\SO(n)/\SO(n-2)$ in \cite{Abiev25b}, and DeVito, Gonz\'alez-\'Alvaro, and Zarei focused on spheres and complex projective spaces in \cite{DGAZpreprint}.
For more results on homogeneous Ricci flow, see the work of Abiev \cite{Abiev25a}, Abiev et.~al.~\cite{AANS14}, B\"ohm \cite{Bohm15}, B\"ohm and Lafuente \cite{BohmLafuente18}, Buzano \cite{Buzano14}, Cavenaghi et.~al.~\cite{CGM}, Gonz\'alez-\'Alvaro, and Zarei \cite{GAZ24,GAZ25}, Lauret \cite{Lauret13}, Sbiti \cite{Sbiti20}, and Statha \cite{Statha22}.

In this paper, we investigate the preservation of positive Ricci curvature for $\gG$-invariant Riemannian metrics under normalized Ricci flow on simply connected homogeneous spaces $\gG/\gH$ where $\gG$ is compact and semisimple, $\gH$ is closed and connected, and the isotropy representation splits into \textit{exactly two equivalent} irreducible summands. 
In the case that $\gG$ is simple, it follows from the classification by Dickinson and Kerr in \cite{DickinsonKerr08} that $\gG/\gH$ is $\Spin(8)/\gG_2$, which is diffeomorphic to $S^7 \times S^7$.
Our main result is the following:

\begin{result}\label{mainthm:Ricpos}
        Every $\Spin(8)$-invariant metric on $\Spin(8)/\gG_2$ evolves to a metric with positive Ricci curvature under normalized Ricci flow.
        Furthermore, the family of $\Spin(8)$-invariant metrics with positive Ricci curvature is forward-invariant under the flow.
\end{result}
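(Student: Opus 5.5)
We would first reduce the flow to an explicit planar ODE. For $\Spin(8)/\gG_2$ we have $\so(8)=\fg_2\oplus\fm$ with $\fm\cong\bR^2\otimes\bR^7$, where $\bR^7$ is the irreducible (real type) representation of $\gG_2$. By Schur's lemma every $\Spin(8)$-invariant metric corresponds to a positive definite symmetric $2\times 2$ matrix $P$, acting as $P\otimes\mathrm{Id}_{\bR^7}$ with respect to a fixed $-B$-orthonormal basis adapted to a choice of two equivariant copies of $\bR^7$. By naturality the Ricci tensor has the same form, $\Ric_P=R(P)\otimes\mathrm{Id}$, for a symmetric $2\times 2$ matrix $R(P)$. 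We would compute $R(P)$ from the standard formula for the Ricci tensor of a homogeneous metric, using the structure constants of $\so(8)$ relative to $\fg_2$. A convenient model is the triality picture: the three copies of $\bR^7\subset\so(8)$ correspond to the three $\Spin(7)$'s containing $\gG_2$. This makes the bracket $[\fm,\fm]$ explicit in terms of the octonion cross product, so the needed constants are just a few numbers.

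Normalized Ricci flow becomes $\dot P=-2R(P)+\tfrac{2\Scal}{14}P$, which preserves $\det P$. We would fix $\det P=1$, so that the phase space is the hyperbolic plane $\{P>0,\det P=1\}$, and use coordinates adapted to its symmetries. The triality symmetry acts on this plane as a group containing $S_3$, because $N(\gG_2)$ together with the outer automorphisms of $\Spin(8)$ preserving $\gG_2$ permute the three $\Spin(7)$-directions. We would use this symmetry to reduce the analysis to a fundamental domain.

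Next we would locate the fixed points, i.e.\ the invariant Einstein metrics. We expect four of them: the normal metric, and the three triality images of the product of round metrics on $S^7\times S^7$. The condition $\Ric>0$ is $\trace R(P)>0$ and $\det R(P)>0$ on the plane. We would identify the region $\mathcal{R}$ where this holds and describe its boundary curve $\{\det R(P)=0\}$ explicitly in the chosen coordinates; we expect three symmetric arcs running out to the ideal boundary. Forward invariance would follow by showing that the vector field points strictly into $\mathcal{R}$ along each boundary arc. Concretely, we would compute the time derivative of $\det R(P)$ along the flow at points where $\det R(P)=0$ and show it is positive. If that derivative vanishes somewhere, we would handle those points by a second-order argument.

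For the first assertion we would show that every trajectory starting outside $\mathcal{R}$ enters $\mathcal{R}$ in finite time. In the complementary regions (near the three ideal "cusps") we would look for a monotone quantity, for instance one eigenvalue ratio of $P$, or an explicit function adapted to the product Einstein saddles. This quantity should strictly decrease at a definite rate until the trajectory crosses the boundary arc, which rules out orbits that stay outside forever or escape to infinity while avoiding $\mathcal{R}$. The main obstacle is this step, in particular controlling trajectories near the ideal boundary of the plane and near the saddle-type Einstein points, whose stable manifolds might a priori lie outside $\mathcal{R}$. We would first check that those Einstein metrics themselves have $\Ric>0$ (they are Einstein with positive scalar curvature, so they do), so that their stable manifolds end inside $\mathcal{R}$. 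We would then use asymptotic expansions of $R(P)$ at the ideal boundary to show that orbits cannot escape there without entering $\mathcal{R}$.
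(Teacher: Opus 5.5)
Your reduction to a flow on the hyperbolic plane of unit-determinant metrics, the $S_3$ symmetry, the count of four invariant Einstein metrics, and your forward-invariance step are all correct. In the paper's coordinates $x=a-3b$, $y=b$, the product $P$ of the Ricci eigenvalues is a constant multiple of your $\det R$, since $\det g$ is frozen. It satisfies $P'=72x^2(2x^2+6xy+3)^2(x^3+3x^2y+2x+2y)$, which is strictly positive wherever $P=0$, so no second-order argument is needed. Your symmetry is also a saving the paper does not use: its regions $A$, $C$, $F$ are congruent under $S_3$, as are $B$, $D$, $E$.

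The gap is in the first assertion, which is the real content of the theorem, and there your proposal offers no mechanism.

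\begin{itemize}
\item The complement of $\mathcal{R}$ is not a neighbourhood of cusps. Its three components abut the ideal arcs \emph{between} the three vertices at which the arcs of $\{\det R=0\}$ end. These vertices are the ideal points where the $S^7$-fibres of one of the fibrations $\Spin(8)/\gG_2\to\Spin(8)/\Spin(7)$ shrink.
\item Every orbit in the complement that is not on a stable manifold escapes to infinity by converging to one of these vertices. The vertices also lie in the closure of $\mathcal{R}$. So ``escaping to infinity'' and ``entering $\mathcal{R}$'' are not alternatives.
\item A quantity such as an eigenvalue ratio of the metric degenerates at the ideal boundary on either side of the arc. It can therefore decrease at a definite rate forever without forcing a crossing.
\end{itemize}

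What is needed is a quantitative comparison, at the vertex, between the orbit and the arc. Invoking ``asymptotic expansions'' locates this work but does not do it. The paper does it as follows.

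\begin{itemize}
\item The formula for $P'$ is $\geq 0$ on all admissible metrics, so $P$ is a global Lyapunov function. This excludes periodic orbits, gives unboundedness via Poincar\'e--Bendixson, and provides a quantity bounded above by $0$ on the bad set.
\item Near the vertex $x\to-\infty$, it shows $dP/dx\leq 108(1-3\epsilon)/x$ as long as $P\leq 0$. Hence $P\geq C_1\ln|x|+C_2\to\infty$.
\item Near $x\to 0$, it first proves $|x|\downarrow 0$ and $y\uparrow\infty$, then shows $\frac{x}{y}\frac{dy}{dx}\to -1$. So the orbit obeys $y\leq A|x|^{-(1+\epsilon)}$, while the arc is $y_2(x)\sim\frac{\sqrt{3}}{2}x^{-2}$; that is, the orbit reaches the vertex strictly inside $\mathcal{R}$.
\end{itemize}

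Without an argument of this kind, including a proof that orbits in the bad regions actually converge to a vertex, your plan does not show that every invariant metric eventually has $\Ric>0$.
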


Alternatively, in the case that $\gG/\gH$ has exactly two irreducible isotropy summands and $\gG$ is \textit{not} simple, Pulemotov and Ziller observed in \cite[Section 5.2]{PulemotovZiller21preprint} that $\gG/\gH$ must be a Ledger-Obata space $(\gH\times \gH\times \gH)/\Delta \gH$, where $\gH$ is simple and $\Delta \gH$ denotes the diagonal embedding of $\gH$ into $\gH^3$. 
They also show that the scalar curvature functional of $\gH^3$-invariant metrics on all Ledger-Obata spaces are proportional to that of $\Spin(8)/\gG_2$.
Since normalized Ricci flow is the $L^2$-gradient flow of the scalar curvature, it follows that the ODE systems of normalized Ricci flow for these spaces all coincide. 
Consequently, Theorem \ref*{mainthm:Ricpos} immediately yields the following:

\begin{corollaryresult}\label{cor:LedgerObata}
    Let $\gG/\gH$ be a simply connected homogeneous space for which $\gG$ is compact and semisimple, $\gH$ is closed and connected, and the isotropy representation splits into exactly two equivalent irreducible summands.
    Then every $\gG$-invariant metric on $\gG/\gH$ evolves to a metric with positive Ricci curvature under normalized Ricci flow.
    Furthermore, the family of $\gG$-invariant metrics with positive Ricci curvature is forward-invariant under the flow.
\end{corollaryresult}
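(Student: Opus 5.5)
The corollary reduces to Theorem \ref*{mainthm:Ricpos} once we check that the normalized Ricci flow on the relevant spaces of invariant metrics is literally the same ODE. First I pin down the spaces. If $\gG$ is simple, the classification of Dickinson and Kerr \cite{DickinsonKerr08} gives $\gG/\gH=\Spin(8)/\gG_2$, and Theorem \ref*{mainthm:Ricpos} applies directly. If $\gG$ is not simple, then by \cite[Section 5.2]{PulemotovZiller21preprint} the space is a Ledger--Obata space $\gH^3/\Delta\gH$ with $\gH$ simple. So the only case left is the Ledger--Obata case.

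Next I set up compatible parametrizations. Because the two isotropy summands $\fm_1,\fm_2$ are equivalent, the space of $\gG$-invariant metrics is not just diagonal. It is the cone of positive definite $2\times 2$ symmetric matrices, i.e. a three-parameter family, and fixing volume cuts it to two parameters. For $\Spin(8)/\gG_2$ I use the isotropy decomposition $\fm=\fm_1\oplus\fm_2$ with $\fm_i\cong\mathfrak g_2$-modules. For $\gH^3/\Delta\gH$ I use $\fm\cong\mathfrak h\oplus\mathfrak h$, embedded in $\mathfrak h^3$ via a $Q$-orthonormal complement of the diagonal, where $Q$ is minus the Killing form. After choosing equivariant identifications $\fm_1\cong\fm_2$, both invariant-metric spaces become $\{X\in\mathrm{Sym}^+_2(\bR)\}$. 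By \cite{PulemotovZiller21preprint}, under these identifications the scalar curvature functionals are proportional: $\Scal_{\gH^3/\Delta\gH}(X)=c\,\Scal_{\Spin(8)/\gG_2}(X)$ for some constant $c>0$. If they only state this for diagonal metrics, I would extend it to all of $\mathrm{Sym}^+_2$ using equivariance under the residual $\mathsf{O}(2)$-action mixing the equivalent summands.

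Then I transfer the flow. Normalized Ricci flow restricted to invariant metrics of fixed volume is, up to a positive constant and time reparametrization, the gradient flow of $\Scal$ on the fixed-volume slice. The metric used is the $L^2$ metric $\langle h,k\rangle_X=\trace(X^{-1}hX^{-1}k)$ weighted by $\dim\fm_i$. Both spaces have $\dim\fm_1=\dim\fm_2$, with $14$ replaced by $\dim\mathfrak h$, so this ambient metric agrees on $\mathrm{Sym}^+_2$ up to a constant factor. Hence the trajectories coincide as unparametrized curves, and with the same time orientation. The step I expect to be the main obstacle is making this bookkeeping precise: getting the dimension weights and the constant $c$ to give flows that agree up to positive time rescaling, and confirming that volume normalization does not depend on anything beyond $\dim\mathfrak h$ and $\det X$. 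If needed, I would instead compare the Ricci tensors directly. Using $\Ric=\tfrac{1}{?}\nabla\Scal$ in the homogeneous setting, I would show $\Ric^{LO}(X)=\lambda\,\Ric^{\Spin(8)/\gG_2}(X)$ as $2\times2$ matrices with $\lambda>0$ constant.

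Finally, I transfer positivity. For an invariant metric $X$, the Ricci tensor is given by a symmetric $2\times2$ form on the multiplicity space tensored with the invariant inner product on $\fm_1$. Positive Ricci curvature is therefore the positive definiteness of this $2\times 2$ form. If $\Ric^{LO}(X)=\lambda\Ric(X)$ with $\lambda>0$, then the positive Ricci regions correspond under the identification. Since the flow trajectories also correspond, both conclusions of Theorem \ref*{mainthm:Ricpos} carry over: every trajectory eventually enters the positive Ricci region, and that region is forward-invariant.
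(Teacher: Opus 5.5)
Your proposal follows the paper's route: reduce via Dickinson--Kerr and Pulemotov--Ziller to $\Spin(8)/\gG_2$ or a Ledger--Obata space $\gH^3/\Delta\gH$, then combine proportionality of $\Scal$ on the \emph{full} cone of invariant metrics with the gradient-flow structure of normalized Ricci flow. You also make explicit the Ricci-proportionality step that the paper leaves implicit. The ``?'' in your formula is $-1$: for invariant $h$ one has $d\Scal_g(h)=-\langle\Ric_g,h\rangle_g$, which gives $\Ric^{LO}=\lambda\Ric$ with $\lambda=7c/\dim\mathfrak h>0$. Note that the summands of $\Spin(8)/\gG_2$ are $7$-dimensional, not $14$.

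Drop the fallback that uses a ``residual $\mathsf{O}(2)$-action'', because no such symmetry exists. At $c=0$, $\Scal$ is not invariant under $a\leftrightarrow b$, which is rotation by $\pi/2$; only the finite $S_3$ acts (triality, resp.\ permutation of the factors of $\gH^3$). So proportionality cannot be propagated from diagonal metrics and must be taken for all of $\mathrm{Sym}^+_2$, as the paper does.
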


We note that our result has a conclusion similar to Abiev's main result on the Stiefel manifolds $\SO(n)/\SO(n-2)$ in \cite{Abiev25b}.
While Abiev exploits extra symmetries to diagonalize the $\SO(n)$-invariant metrics on $\SO(n)/\SO(n-2)$ (see \cite[Section 4]{Kerr98}), the family of $\gG$-invariant metrics we consider are not simultaneously diagonalizable.
Consequently, we must analyze the full family of $\gG$-invariant metrics rather than a diagonal subfamily.

Our proof relies on the fact that the family of $\gG$-invariant metrics with fixed volume is two-dimensional in our case.
Thus, the dynamics of the ODE system associated with normalized Ricci flow can be completely visualized.
The spaces we consider here are among the few remaining for which the family of $\gG$-invariant metrics is so low-dimensional while the behavior of normalized Ricci flow has not been studied.
    
\subsection{Organization of paper}

In Section \ref{sec:invariantmetrics}, we give an overview of the isotropy representation, family of $\Spin(8)$-invariant metrics, Ricci curvature, and scalar curvature of $\Spin(8)/\gG_2$.
We then write the normalized Ricci flow equation and highlight the fixed points and several important invariant curves for the system.
In Section \ref{sec:mainproof}, we adopt a more convenient coordinate system for analyzing the system and prove Theorem \ref{mainthm:Ricpos}.
We point out that in several arguments, we rely on the IsEmpty function from Maple’s RegularChains package and SemiAlgebraicSetTools subpackage to prove that certain sets of polynomial inequalities imply others.
The Maple code we used to justify these claims is available on the GitHub repository \cite{GitHub}.
For an overview of this subpackage, see \cite{MapleSite} and the references therein.

\subsection{Acknowledgments}

We would like to thank Jason DeVito, David Gonz\'alez-\'Alvaro, Lee Kennard, Megan Kerr, Yuri\u{i} Nikonorov, Artem Pulemotov, William Wylie, Masoumeh Zarei, and Wolfgang Ziller for helpful discussions. The first author was partially supported by NSF grant DMS-2342135. The second and third authors were supported by Semmes Distinguished Scholars in Science scholarships, and the third author was supported by an AMS-Simons Research Enhancement Grants for PUI Faculty.

    



\section{Ricci flow equation on the family of invariant metrics}
\label{sec:invariantmetrics}

\subsection{Invariant metrics}
    By the classification carried out by Dickinson and Kerr in \cite{DickinsonKerr08}, $\Spin(8)/\gG_2$ is the only simply connected homogeneous space $\gG/\gH$ such that $\gG$ is simple and compact, $\gH$ is a connected and closed subgroup, and the isotropy representation splits into exactly two equivalent subrepresentations. 

    To describe the isotropy representation of this space, we will follow \cite{Kerr98,Kerin11}.
    Letting $\bO$ denote the octonions, or Cayley numbers, there is a canonical matrix group representation for $\Spin(8)$: 
    \[
        \Spin(8) = \{(A,B,C) \in \SO(8) : A(x) B(y) = C(xy) \; \text{ for all } x, y \in \bO \}.
    \]
    $\Spin(8)$ acts transitively on the product of spheres $S^7 \times S^7 \subset \bO \times \bO$ via the map $(A,B,C) : (x,y) \mapsto (Ax,By)$, and the isotropy group of the element $(1,1)$ is 
    \[
        \gG_2 = \{ (A,B,C) \in \Spin(8) : A = B = C \}.
    \]
    Thus $\Spin(8)/\gG_2$ is diffeomorphic to $S^7 \times S^7$.

    Now $\Spin(8)$ is the universal cover of $\SO(8)$, with a double covering homomorphism $\Spin(8)\to\SO(8)$ given by $(A,B,C) \mapsto C$.
    Thus, the Lie algebra of $\Spin(8)$ can be identified with $\so(8) = \{A \in M_8(\bR) : A^t = -A\}$.
    For an appropriately ordered basis of $\bO$, we have the inclusions of Lie groups
    \[
        \gG_2 \subset 
        \begin{pmatrix}
            1 & 0 \\
            0 & \SO(7)
        \end{pmatrix}
        \subset \SO(8),
    \]
    which induce inclusions of Lie algebras $\fg_2 \subset \diag(0,\so(7)) \subset \so(8)$.
    We will identify $\fg_2$ and $\so(7)$ with their inclusions into $\so(8)$.
    Hence, $\fg_2$ consists of matrices of the form
    \[
        \left( 
            \begin{array}{c|ccc}
              0 &  & 0 &  \\ \hline
               &  &  &  \\
              0 &  & U &  \\
               &  &  &  
            \end{array} 
        \right),
    \]
    where $U$ is a skew-symmetric matrix given by
    \[
        \begin{pmatrix}
            0                & u_1+u_2 & u_7+u_8 & u_3+u_4 & u_9+u_{10} & u_5+u_6           & u_{11}+u_{12} \\
            *       & 0       & u_{13}  & -u_{11} & u_5        & -u_9              & u_3           \\
            *      & * & 0       & u_6     & u_{12}     & -u_4              & -u_{10}       \\
            *      & *  & *    & 0       & u_{14}     & u_7               & -u_1          \\
            *    & *    & * & * & 0          & u_2               & u_8           \\
            *      & *    & *    & *    & *       & 0                 & u_{13}+u_{14} \\
            * & *    & *  & *     & *       & *  & 0             \\
        \end{pmatrix},
    \]
    where $(u_1, \dots , u_{14}) \in \bR^{14}$.
    Now any bi-invariant metric on $\SO(8)$ is given as a negative real scalar multiple of the Killing form on $\so(8)$.
    We will choose $Q$ to be the bi-invariant metric on $\SO(8)$ given by
    \[
        Q(X,Y) = - \frac{1}{2} \trace(XY) \text{ for all } X,Y \in \so(8),
    \]
    We then have the $Q$-orthogonal decomposition
    \[
        \so(7) = \fm \oplus \fg_2,
    \]
    where $\fm$ consists of matrices parametrized by $\vec{v} = (v_1,\dots,v_7) \in \bR^7$ of the form
    \begin{equation}
    \label{eq:M}
        M(\vec{v}) = 
        \begin{pmatrix}
            0   & 0       & 0     & 0     & 0     & 0     & 0     & 0     \\
            0   & 0       & v_1   & v_2   & v_3   & v_4   & v_5   & v_6   \\
            0   & -v_1    & 0     & v_7   & v_6   & -v_5  & v_4   & -v_3  \\
            0   & -v_2    & -v_7  & 0     & -v_5  & -v_6  & v_3   & v_4   \\
            0   & -v_3    & -v_6  & v_5   & 0     & v_7   & -v_2  & v_1   \\
            0   & -v_4    & v_5   & v_6   & -v_7  & 0     & -v_1  & -v_2  \\
            0   & -v_5    & -v_4  & v_3   & v_2   & v_1   & 0     & -v_7  \\
            0   & -v_6    & v_3   & -v_4  & -v_1  & v_2   & v_7   & 0     \\
        \end{pmatrix}.
    \end{equation}
    Similarly, we have a $Q$-orthogonal decomposition 
    \[
        \so(8) = \fp \oplus \so(7),
    \]
    where $\fp$ is parametrized by $\vec{w} \in \bR^7$ via
    \begin{equation}
    \label{eq:P}
        P(\vec{w}) = 
        \left( 
            \begin{array}{c|ccc}
              0 &  & -\vec{w}^T &  \\ \hline
               &  &  &  \\
              \vec{w} &  & 0 &  \\
               &  &  &  
            \end{array} 
        \right)
    \end{equation}
    The isotropy representation of $\gG_2$ on $\fg_2^\perp \subset \so(8)$ splits into two equivalent subrepresentations that respect the splitting $\fg_2^\perp = \fm \oplus \fp$.
    Following \cite[Equation 21]{Kerin11}, a $\gG_2$-equivariant linear isomorphism $\psi:\fm \to \fp$ is given by
    \[
        \psi(M(v_1, \dots ,v_7)) = P(v_7, -v_2, v_1, -v_4, v_3, v_6, -v_5).
    \]
    Letting $e_1,\dots,e_7$ denote the standard basis for $\bR^7$, we fix a $Q$-orthonormal basis $\{X_1,\dots,X_{14}\}$ for $\fm \oplus \fp$ given by 
        \begin{equation}\label{eq:Xi}
            X_i = \frac{1}{\sqrt{3}}M(e_i)\text{ and }X_{i+7} = \psi(M(e_i))\text{ for }1 \leq i \leq 7.
        \end{equation}
    
    The complexifications of the $\gG_2$-representations on $\fm$ and $\fp$ are irreducible.
    Thus, it follows from Schur's lemma that with respect to the basis $\{X_i,\dots,X_{14}\}$, all $\gG_2$-invariant inner products $g$ on $\fm\oplus\fp$ are of the form 
    \[
        g = \begin{pmatrix}
            a I_7 & c I_7 \\
            c I_7 & b I_7
        \end{pmatrix},
    \]
    where $a,b,c\in\bR$, $a>0$, $b>0$, $ab-c^2 > 0$, and $I_7$ is the $7 \times 7$ identity matrix.
    The family of $\Spin(8)$-invariant metrics on $\Spin(8)/\gG_2$ is in one-to-one correspondence with the set of $\mathrm{Ad}(\gG_2)$-invariant inner products on $\fm\oplus\fp$, so we will refer to an element of either set as simply $g$.
    
        


    


\subsection{Curvature}
    Given an invariant metric $g$ on $\Spin(8)/\gG_2$, the Ricci tensor can be identified with a $G_2$-equivariant endomorphism on $\fm \oplus \fp$. 
    Thus, it also follows from Schur's lemma that the Ricci $(0,2)$-tensor for $g$ can be represented in the basis $\{X_1, \dots, X_{14}\}$ by
    \[
        \of{\Ric_{i,j}} = \begin{pmatrix}
            \alpha I_7 & \gamma I_7 \\
            \gamma I_7 & \beta I_7
        \end{pmatrix}, 
    \]
    for some real numbers $\alpha, \beta, \gamma$.
    By adapting \cite[Corollary 7.33]{Besse87}, or using \cite[Lemma 3.6]{Puttmann99}, it is straightforward to compute that 
        \begin{align*}
            \alpha &= \frac{9}{2} + \frac{(a^2 - 3 c^2)^2}{2(a b - c^2)^2},\\
            \beta &= \frac{-2 a^3 b + 3 a^2 (4 b^2 + c^2) - 24 a b c^2 + 9 b^2 c^2 + 6 c^4}{2(a b - c^2)^2}, \\
            \gamma &= \frac{a c (a - 3 b)^2}{2(a b - c^2)^2}.
        \end{align*}
        Then in the basis $\{X_1, \dots, X_{14}\}$, the Ricci $(1,1)$-tensor can be represented by
        \[
            \of{\Ric^i_j} = \of{g^{i,k} \Ric_{k,j}} = 
            \begin{pmatrix}
                \delta I_7 & \epsilon I_7 \\
                \zeta I_7 & \eta I_7
            \end{pmatrix}
        \]
        where
        \begin{equation}\label{eq:Ric11abc}
        \begin{aligned}
            \delta &= \frac{a^3 + 9 a b^2 - 18 b c^2}{2 (a b - c^2)^2}, \\
            \epsilon &= \frac{3c (a^2 - 6 a b + 3 b^2 + 2 c^2)}{2 (a b - c^2)^2}, \\
            \zeta &= -\frac{3c (a^2 - 3 c^2)}{(a b - c^2)^2},\\
            \eta &= -\frac{a (a^2 - 6 a b + 3 c^2)}{(a b - c^2)^2}.
        \end{aligned}
        \end{equation}
        
        This operator has two real eigenvalues given by the roots of the polynomial $(\delta-x)(\eta-x)-\epsilon\zeta$.
        To show that these roots are positive, it is enough to check that their sum $S=\delta+\eta$ and product $P=\delta\eta-\epsilon\zeta$ are both positive.
        Thus, we have
        \begin{align*}
            S &= -\frac{a^3 - 12 a^2 b - 9 a b^2 + 6 a c^2 + 18 b c^2}{2 (a b - c^2)^2}\\
            P &= \frac{-a^6 + 6 a^5 b - 9 a^4 b^2 + 6 a^4 c^2 + 54 a^3 b^3 - 36 a^3 b c^2 }{2 (a b - c^2)^4} \\
            &\phantom{==} + \frac{- 108 a^2 b^2 c^2 - 9 a^2 c^4 + 216 a b c^4 - 81 b^2 c^4 - 54 c^6}{2 (a b - c^2)^4}.
        \end{align*}
        
        Finally, it follows that the scalar curvature is
        \[
            \Scal = -\frac{7(a^3 - 12 a^2 b - 9 a b^2 + 6 a c^2 + 18 b c^2)}{2 (a b - c^2)^2}.
        \]

\subsection{Normalized Ricci flow}
    The normalized Ricci flow equation for an $n$-dimensional manifold $M^n$ with initial metric $g_0$ is as follows:
    \[
        \frac{\partial}{\partial t} g(t) = -2 \Ric_{g(t)} + \frac{2 \Scal}{n} g(t), \qquad g(0)=g_0.
    \]
        Applying our computations above, we have the following system of equations for the normalized homogeneous Ricci flow on $\Spin(8)/\gG_2$:
        \begin{align*}
            &
            a'(t)=-\frac{3(a^4 - 4 a^3 b + 3 a^2 b^2 - 2 a^2 c^2 - 6 a b c^2 + 12 c^4)}{2 (a b - c^2)^2},\\
            &
            b'(t)=\frac{9 a b^3 - 12 b^2 ( a^2 + 3 c^2)  + 3 b (a^3 + 14 a c^2) - 6 c^2 (a^2 + 2 c^2)}{2 (a b - c^2)^2},\\
            &
            c'(t)=-\frac{3c (a^3 - 8 a^2 b + 3 a b^2 + 2 a c^2 + 6 b c^2)}{2 (a b - c^2)^2},\\
            & 
            a>0, \quad b>0, \quad ab-c^2>0.
        \end{align*}
    
    We will normalize the volume of $\Spin(8)/\gG_2$ such that 
    \begin{equation}\label{eq:volume_normalization}
        ab-c^2 = 1/2.
    \end{equation}
    Then normalized Ricci flow equation is given in the variables $a,b$ as follows:
    \begin{equation}\label{eq:sys_ab}
    \begin{aligned}
        &
        a'(t) = -18 - 6 a^4 + 36 a^3 b + (-54 b^2 - 6) a^2 + 54 a b,  \\
        &
        b'(t) = -6 a^3 b + (36 b^2 + 6) a^2 + (-54 b^3 - 18 b) a + 36 b^2 - 6,  \\
        &
        a>0, \quad b>0, \quad 2ab \geq 1.
    \end{aligned}
    \end{equation}
    We will call $\{(a,b)\in\bR^2:a>0,b>0,2ab\geq 1\}$ the set of admissible metrics.

    Fixed points for the flow occur at 
    \begin{equation}\label{eq:sing_ab}
        (a,b) = \of{ \tfrac{\sqrt{2}}{2} , \tfrac{\sqrt{2}}{2} }, \of{ \tfrac{\sqrt{6}}{4} , \tfrac{5\sqrt{6}}{12} }, \text{ and } \of{ \tfrac{\sqrt{6}}{2} , \tfrac{\sqrt{6}}{6} }.
    \end{equation}
    Each of these fixed points corresponds to a homogeneous Einstein metric on $\Spin(8)/\gG_2$ found by Kerr in \cite{Kerr98}.
    The metric induced by the Killing form on $\so(8)$ corresponds to $( \frac{\sqrt{2}}{2} , \frac{\sqrt{2}}{2} )$, while a product metric on $S^7 \times S^7 \cong \Spin(8)/\gG_2$ corresponds to $( \frac{\sqrt{6}}{4} , \frac{5\sqrt{6}}{12} )$.
    Finally, the metric corresponding to $(\frac{\sqrt{6}}{2} , \frac{\sqrt{6}}{6} )$ can be obtained from this product metric by conjugating by a rotation matrix:
    \[
        \begin{pmatrix}
            \frac{1}{2}         & -\frac{\sqrt{3}}{2} \\
            \frac{\sqrt{3}}{2}  & \frac{1}{2}
        \end{pmatrix}
        \begin{pmatrix}
            \frac{\sqrt{6}}{2}  & 0 \\
            0                   & \frac{\sqrt{6}}{6}
        \end{pmatrix}
        \begin{pmatrix}
            \frac{1}{2}         & \frac{\sqrt{3}}{2} \\
            -\frac{\sqrt{3}}{2} & \frac{1}{2}
        \end{pmatrix}
        =
        \begin{pmatrix}
            \frac{\sqrt{6}}{4}      & \frac{\sqrt{18}}{12} \\
            \frac{\sqrt{18}}{12}    & \frac{5\sqrt{6}}{12}
        \end{pmatrix}        
    \]
    As Kerr writes in \cite{Kerr98}, this rotation is the action of the triality automorphism of $\so(8)$, and the metric corresponding to $( \frac{\sqrt{6}}{2} , \frac{\sqrt{6}}{6})$ is isometric to a product metric.

    
        It is straightforward to check that the following are invariant curves for the flow of the System \eqref{eq:sys_ab}:
        \begin{enumerate}
            \item $2ab = 1$,                        
            \label{curve1ab}
            
            \item $a = 3b$,                         
            \label{curve2ab}
            
            \item $3( a^2 + b^2 ) + 2 = 10 a b$.    
            \label{curve3ab}
            
            \item $2a^2 + 3 = 6ab$.                 
            \label{curve4ab}
        \end{enumerate}
    (\ref*{curve1ab}) is the boundary curve for the set of admissible metrics, which corresponds to where $c=0$.
    We note that (\ref*{curve4ab}) can be obtained from (\ref*{curve2ab}) by applying the same transformation that sends $\of{ \frac{\sqrt{6}}{2} , \frac{\sqrt{6}}{6} }$ to $\of{ \frac{\sqrt{6}}{4} , \frac{5\sqrt{6}}{12} }$.
    
    Figure \ref{fig:phaseportait} illustrates a phase portrait for System \eqref{eq:sys_ab} on the left.
    The gray region consists of inadmissible values of $(a,b)$ for which $2ab<1$, the red points are fixed points for the system which correspond to Einstein metrics, the black dotted curves are the invariant curves we have listed above, and the green region is the set of metrics for which $\Ric>0$.


    \begin{figure}[H]
        \centering
        \includegraphics[width=0.49\textwidth]{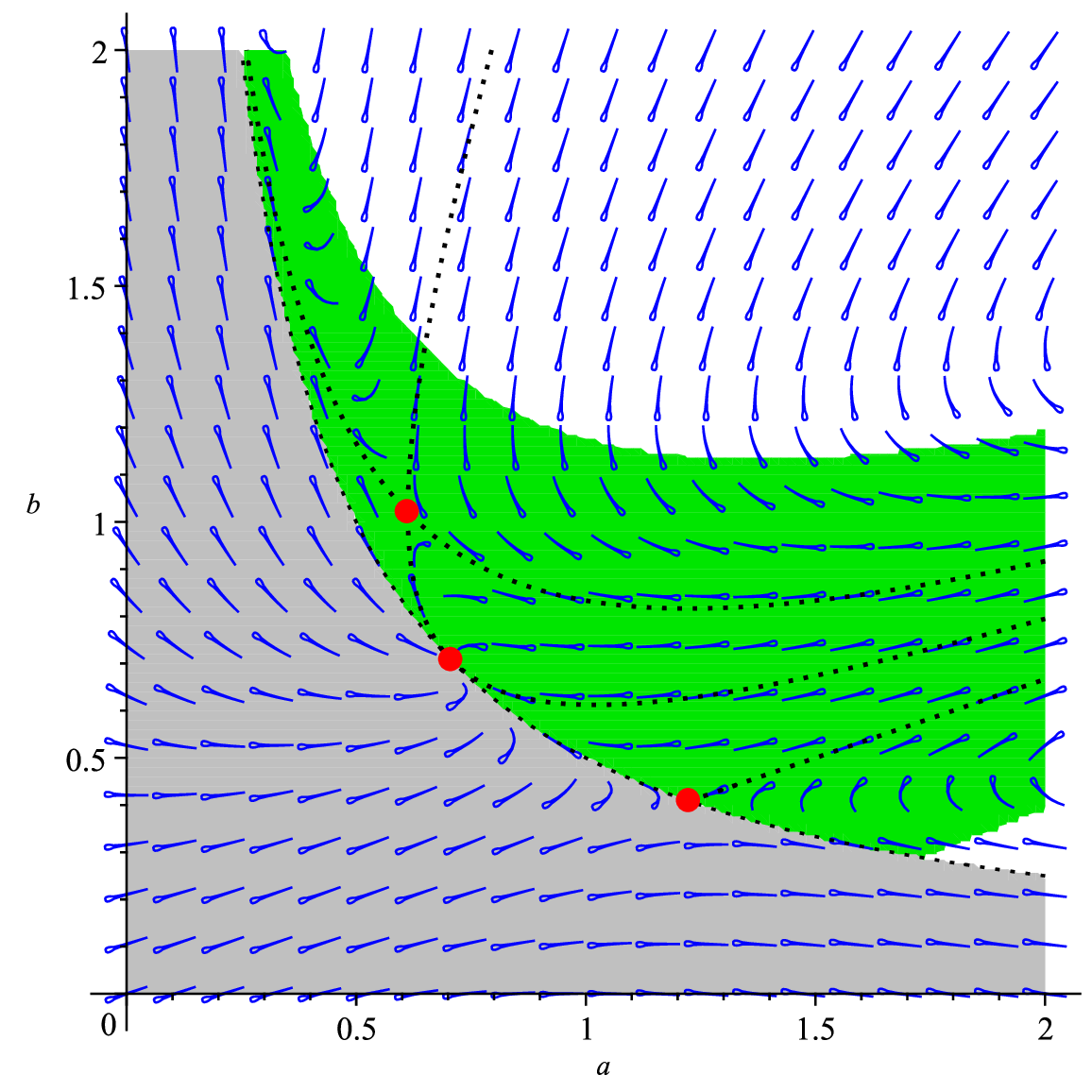}\includegraphics[width=0.49\textwidth]{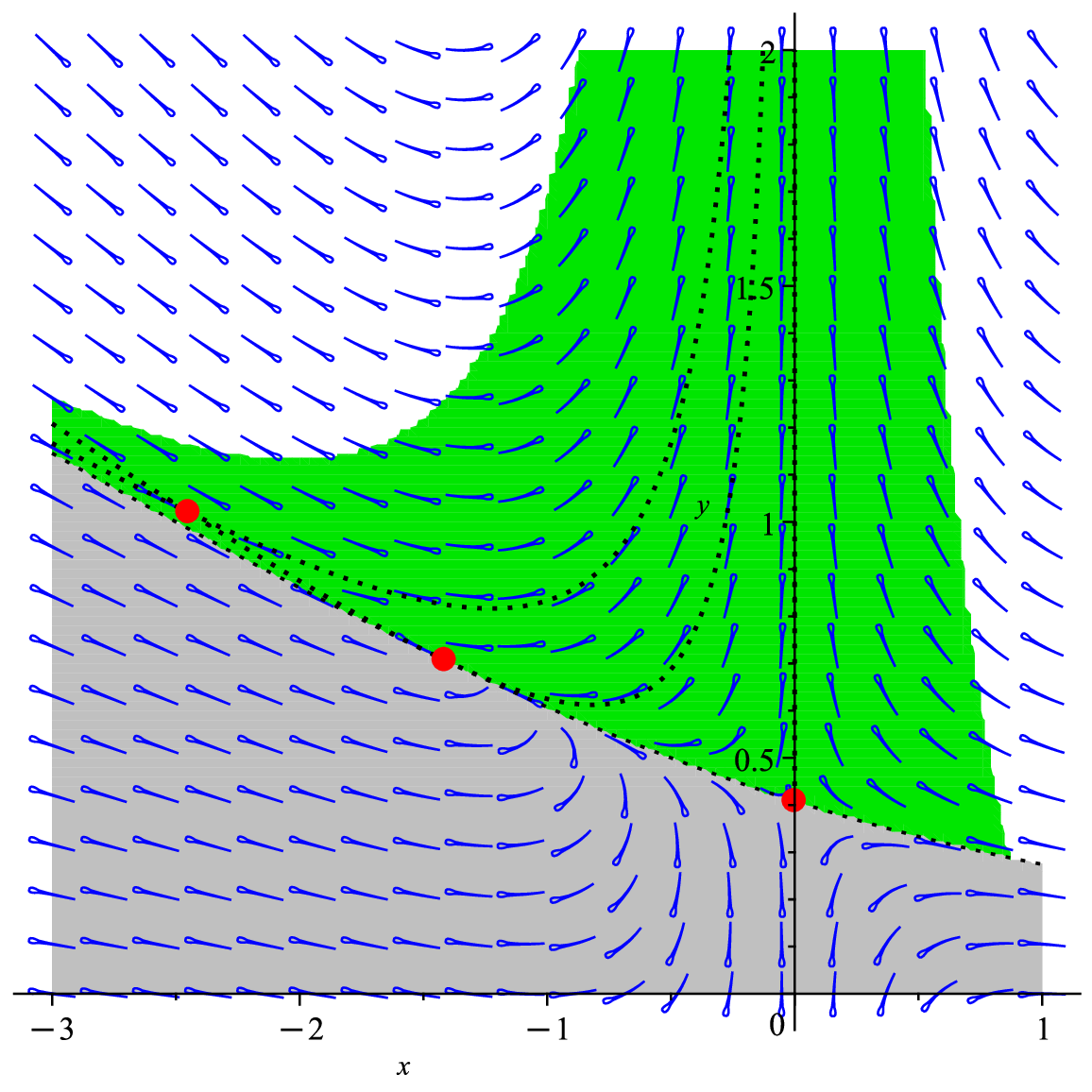}
        \caption{Phase portraits for Systems \eqref{eq:sys_ab} and \eqref{eq:sys_xy}.}
        \label{fig:phaseportait}
    \end{figure}

\section{Positive Ricci curvature in the long term}
\label{sec:mainproof}


    In this section, we will prove Theorem \ref{mainthm:Ricpos}.
    Namely, we will show that all solutions to the System \eqref{eq:sys_ab} eventually develop and maintain positive Ricci curvature.

    Because the line $a=3b$ is an invariant curve of the System \eqref{eq:sys_ab}, we find it convenient to implement the change of coordinates $x= a - 3b$ and $y=b$.  In these new coordinates, System \eqref{eq:sys_ab} becomes:
    \begin{equation}\label{eq:sys_xy}
    \begin{aligned} 
        &
        x'(t) = -6x\left(x^3 + 3x^2 y + 4x + 6y\right), \\
        &
        y'(t) = -6x^3 y - 18x^2 y^2 + 6x^2 + 18x y + 36y^2 - 6. \\
        &
        2xy+6y^2\geq 1, \quad y>0  .
    \end{aligned}
    \end{equation}
    Figure \ref{fig:phaseportait} illustrates a phase portrait for System \eqref{eq:sys_xy} on the right. 
    The fixed points from \eqref{eq:sing_ab} are given by
    \begin{equation}\label{eq:sing_xy}
        (x,y) = \of{-\sqrt{2},\tfrac{\sqrt{2}}{2}}, \of{-\sqrt{6},\tfrac{5\sqrt{6}}{12}}, \text{ and } \of{0,\tfrac{\sqrt{6}}{6}}.
    \end{equation}
    The invariant curves from \eqref{curve1ab}--\eqref{curve4ab} above are given in the $(x,y)$-coordinates by
    \begin{enumerate}
        \item \label{curve1xy} $2xy + 6y^2 = 1$ (with $y>0$),
        \item \label{curve2xy} $x=0$,
        \item \label{curve3xy} $3x^2 + 8xy + 2 = 0$ (with $x<0$), and 
        \item \label{curve4xy} $2x^2 + 6xy + 3 = 0$ (with $x<0$). 
    \end{enumerate}
    The entries of the Ricci $(1,1)$-tensor from \eqref{eq:Ric11abc} become
    \begin{align*}
        \delta &= 2 x^3 + 18 x^2 y + 36 x y^2 + 18 y, \\
        \epsilon &= 3 (x^2 + 2 x y - 1) \sqrt{4 x y + 12 y^2 - 2} , \\
        \zeta &= -3 (2x^2 + 6 x y + 3) \sqrt{4 x y + 12 y^2 - 2} ,\\
        \eta &= -2 (2x^2 + 6 x y - 3) (x + 3 y).
    \end{align*}
    Thus, the sum $S$ and product $P$ of the eigenvalues of the Ricci tensor (counted without multiplicity) are 
    \begin{equation}\label{eq:Pxy}
    \begin{aligned}
        S &= -2 x^3 - 6 x^2 y + 6 x + 36 y,  \\
        P &= 54 - 8 x^6 - 48 x^5 y + (-72 y^2 - 24) x^4 - 72 x^3 y - 18 x^2.
    \end{aligned}
    \end{equation}

    Throughout this section, $(x(t),y(t))$ will refer to a solution to the System \eqref{eq:sys_xy}, and $g(t) = g(x(t),y(t))$ will denote the associated metrics.
    When discussing aspects of these objects that are not time dependent, we will refer to them simply as $(x,y)$ and $g$, respectively.

    We first show that it is enough to check for the positivity of $P$:

    
    \begin{lemma}\label{lem:ISTS_P>0}
        A metric $g$ has $\Ric > 0$ if and only if $P > 0$.
    \end{lemma}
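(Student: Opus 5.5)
The plan is to reduce the statement to an elementary fact about the $2\times 2$ matrix $\begin{pmatrix}\delta&\epsilon\\ \zeta&\eta\end{pmatrix}$, and then use one geometric input to exclude the single bad sign pattern.

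\textbf{Step 1: reduction to $S$ and $P$.} The Ricci $(1,1)$-tensor is self-adjoint with respect to $g$, so its eigenvalues $\lambda_1,\lambda_2$ are real. In the basis $\{X_1,\dots,X_{14}\}$ it has the block form above, so its eigenvalues are exactly the roots of $(\delta-x)(\eta-x)-\epsilon\zeta$, each with multiplicity $7$. Hence $\Ric>0$ if and only if $\lambda_1,\lambda_2>0$, which holds if and only if $S=\lambda_1+\lambda_2>0$ and $P=\lambda_1\lambda_2>0$. In particular, $\Ric>0$ implies $P>0$, which is one direction. For the converse it remains to show that $P>0$ forces $S>0$ on the admissible region.

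\textbf{Step 2: excluding $P>0$ with $S\le 0$.} Suppose $P>0$ and $S\le 0$. Then $\lambda_1,\lambda_2$ are nonzero and have the same sign. If $S=0$, then $\lambda_2=-\lambda_1$ and $P=-\lambda_1^2\le 0$, a contradiction. Therefore $S<0$, so both eigenvalues are negative and $g$ has negative definite Ricci curvature. On the other hand, $\Spin(8)/\gG_2\cong S^7\times S^7$ is compact. By Bochner's theorem, a compact manifold with $\Ric<0$ has no nonzero Killing fields. However, $\Spin(8)$ acts transitively by isometries of $g$, so $g$ has nonzero Killing fields. This contradiction gives $S>0$, and Step 1 then yields $\Ric>0$.

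\textbf{Main obstacle and fallback.} The only delicate point is Step 2, and the Bochner argument handles it cleanly without any computation. If a purely algebraic proof is preferred, in the spirit of the paper's Maple checks, I would instead verify directly that the semialgebraic set
\[
\{\,2xy+6y^2\ge 1,\ y>0,\ P>0,\ S\le 0\,\}
\]
is empty, using the formulas \eqref{eq:Pxy}. Two identities make this feasible by hand:
\[
8x^6+48x^5y+72x^4y^2 = 8x^4(x+3y)^2,
\qquad
S = -2x^2(x+3y)+6x+36y,
\]
where $x+3y=a>0$. With these, $P>0$ bounds $|x|$, and within that bound $S>0$ follows from $y>0$ and admissibility. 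I would do this via IsEmpty as a cross-check only.
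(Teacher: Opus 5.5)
Your proof is correct, and it handles the converse differently from the paper. The paper shares your Step 1: the $g$-self-adjoint Ricci endomorphism has real eigenvalues, so $\Ric>0$ if and only if $S>0$ and $P>0$. It then certifies, via Maple's IsEmpty, that the set $\{y>0,\ 2xy+6y^2\ge 1,\ P>0,\ S\le 0\}$ is empty.

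You instead note that $P>0$ with $S\le 0$ would force $\Ric<0$. Bochner's theorem rules this out on a compact manifold with a nonzero Killing field, and the transitive $\Spin(8)$-action supplies such fields. Your route is computation-free and structural. It shows that on any compact homogeneous space whose Ricci endomorphism has at most two eigenvalues, $\Ric>0$ holds exactly when the reduced $2\times 2$ matrix has positive determinant. It uses no formula for $S$ beyond $P$ being the eigenvalue product, and it covers the Ledger--Obata spaces verbatim. The paper's route checks the fact only for this specific polynomial system, at the cost of a computer-algebra certificate.

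One correction to your fallback sketch: $P>0$ does not bound $|x|$ on the admissible region. The invariant curve $2x^2+6xy+3=0$ is admissible and unbounded in $x$, yet $P\equiv 54$ on it. The hand computation that works is the pair of identities
\[
P = 54-2u^2,\qquad S = 9(x+4y)-u,\qquad u = 2x^3+6x^2y+3x = x\bigl(2x(x+3y)+3\bigr).
\]
So $P>0$ gives $|u|<3\sqrt{3}$. Admissibility gives $x+4y=a+b\ge 2\sqrt{ab}\ge\sqrt{2}$. Hence $S>9\sqrt{2}-3\sqrt{3}>0$. This replaces the IsEmpty call entirely and would make a good cross-check of your Bochner argument.
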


    \begin{proof}


        Because $P$ is the product of eigenvalues of the  Ricci tensor, it is clear that $\Ric> 0$ implies $P>0$. 
        Conversely, using Maple's RegularChains package and SemiAlgebraicSetTools subpackage, the IsEmpty function shows that $\{(x,y) : y>0, \ 2 x y + 6 y^2 \geq 1, \ P > 0, \ S \leq 0\}$ is empty.
        Thus if $P>0$, then $S>0$, and because the Ricci tensor has only two eigenvalues (counted without multiplicity), it follows that $\Ric>0$.
        %
        %
        %
    \end{proof}

    \begin{remark}\label{rmk 1}
        Setting $P$ from \eqref{eq:Pxy} equal to zero and solving for $y$ yields:
        \[
        y_1(x) = -\frac{x}{3} - \frac{1}{2x} - \frac{\sqrt{3}}{2x^2},
        \qquad
        y_2(x) = -\frac{x}{3} - \frac{1}{2x} + \frac{\sqrt{3}}{2x^2}.
        \]
        Notice $y_1<y_2$ for all $x\neq0$.
        Since the coefficient of $y^2$ is negative in \eqref{eq:Pxy}, we have that $P$ is positive only when $y_1<y<y_2$. 
        Furthermore, if $y>0$ and $2xy+6y^2\geq 1$, then using Maple's IsEmpty function, one can show $y_1 < y$.
        Therefore, we have $P>0$ if and only if $y<y_2$.
    \end{remark}

    Next, we show that when composed with a solution of system \eqref{eq:sys_xy}, $P$ is a strictly increasing function of $t$ almost everywhere.
    First, denoting $P|_{(x(t),y(t))}$ as simply $P(t)$, we remark that
    \begin{align*}
        P'(t) &= 72x^2\left(2x^2 + 6xy + 3\right)^2 \left(x^3 + 3x^2y + 2x + 2y \right).
    \end{align*}
    
    \begin{lemma}
        \label{lem:Pinc}
        For any $(x,y)$ lying in the region where $2xy+6y^2>1$ and $y>0$, we have $P'(t)>0$ for all $t$ except on the two invariant curves $x=0$ and $2x^2+6xy+3=0$ (with $x<0$), on which $P'(t)$ is zero. 
        Additionally, these two curves are contained in the region where $P>0$.
        Consequently, the region where $P>0$ is forward-invariant.
    \end{lemma}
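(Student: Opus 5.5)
The plan is to read off the sign of $P'(t)$ from the factorization
\[
P'(t) = 72x^2\left(2x^2 + 6xy + 3\right)^2 \left(x^3 + 3x^2y + 2x + 2y \right).
\]
The first two factors are squares, so they are nonnegative. They vanish exactly on $x=0$ and on $2x^2+6xy+3=0$. By the remark after System \eqref{eq:sys_xy}, the latter curve forces $x<0$, since $y>0$. Hence the whole statement about the sign of $P'$ reduces to one claim: the cubic factor
\[
R(x,y) = x^3+3x^2y+2x+2y
\]
is strictly positive on $\{y>0,\ 2xy+6y^2>1\}$.

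\textbf{Step 1: positivity of $R$.} This is the step I expect to be the main obstacle. As in Lemma \ref{lem:ISTS_P>0} and Remark \ref{rmk 1}, I would first try Maple's IsEmpty function on the semialgebraic set $\{(x,y): y>0,\ 2xy+6y^2>1,\ R\le 0\}$. As a sanity check by hand, rewrite $R$ in the original coordinates $a=x+3b$, $b=y$, where the region is $2ab>1$. Then
\[
R = a(a-3b)^2 + 2a - 4b.
\]
On the line $a=3b$ this equals $2b>0$. When $b$ is large and $a$ approaches the boundary value $1/(2b)$, the leading term behaves like $\tfrac{9}{2}b$, which beats $4b$. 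So the inequality is plausible throughout the region, and an explicit argument, if wanted, would minimize $R$ in $a$ subject to $a\ge 1/(2b)$. With $R>0$ established, $P'(t)>0$ off the two curves, and $P'(t)=0$ on them.

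\textbf{Step 2: the two curves lie in $\{P>0\}$.} On $x=0$, the formula \eqref{eq:Pxy} gives $P=54$ directly. For the other curve, regroup
\[
-8x^6-48x^5y-72x^4y^2=-8x^4(x+3y)^2 .
\]
On $2x^2+6xy+3=0$ we have $6xy=-2x^2-3$, and therefore $x+3y=-\tfrac{3}{2x}$. Substituting:
\begin{align*}
-8x^4(x+3y)^2 &= -18x^2,\\
-72x^3y &= -12x^2(6xy) = 24x^4+36x^2 .
\end{align*}
Adding these to the remaining terms $54-24x^4-18x^2$ of $P$, everything cancels except the constant, so $P\equiv 54>0$ on this curve as well.

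\textbf{Step 3: forward invariance.} The boundary curve $2xy+6y^2=1$ is invariant, so a solution that starts in the admissible region stays there. By Step 1 and continuity, $P'(t)\ge 0$ on the closed admissible region. Hence $t\mapsto P(t)$ is nondecreasing along every trajectory, and $P(0)>0$ gives $P(t)\ge P(0)>0$ for all $t\ge 0$. Combined with Lemma \ref{lem:ISTS_P>0}, this shows that $\{P>0\}$, which is the set of metrics with $\Ric>0$, is forward-invariant.
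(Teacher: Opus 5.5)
Your proposal is correct and is essentially the paper's proof. Both factor $P'$, show the cubic factor $x^3+3x^2y+2x+2y$ is positive on the open region (the paper also relies on Maple here), check $P>0$ on the two curves, and get forward invariance from the monotonicity of $P$; your by-hand computation that $P\equiv 54$ on both $x=0$ and $2x^2+6xy+3=0$ is a nice explicit replacement for the paper's Maple check of that step. Two small additions: the hand argument you sketch for Step 1 does go through (on $2ab=1$ one gets $R=(1-2b^2)^2/(8b^3)\ge 0$, and since $\partial_a R=3(a-b)(a-3b)+2$, any interior local minimum of $R$ in $a$ lies at $a>2b$, where $R>0$ trivially), and to assert $P'(t)>0$ for all $t$ along a trajectory you should add, as the paper does, that by uniqueness a solution starting off these invariant curves never meets them.
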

    
    \begin{proof}
        First, it is again verifiable using Maple's IsEmpty function that $P'(t)\geq 0$ when $2xy+6y^2 > 1$ and $y>0$. 
        Because $x$ and $2x^2+6xy+3$ are factors of $P'(t)$, the curves $x=0$ and $2x^2+6xy+3=0$ satisfy $P'(t)=0$.
        The zeros of the remaining factor $x^3 + 3x^2y + 2x + 2y$ of $P'(t)$ lie outside of the set where $2xy+6y^2 > 1$, meaning the invariant curves $x=0$ and $2x^2+6xy+3=0$ constitute the only points where $P'(t)=0$.
        By uniqueness of solutions, it follows that any solution whose initial condition satisfies $2xy+6y^2 > 1$ and lies outside of these curves must have $P'(t)>0$ for all $t$. 
        The claim that all points on these invariant curves satisfy $P>0$ is also verifiable with Maple.
    \end{proof}

    The fixed points of System \eqref{eq:sys_xy} given in \eqref{eq:sing_xy} are all contained in the region where $P>0$, and by Lemma \ref{lem:Pinc}, so are the invariant curves \eqref{curve2xy} and \eqref{curve4xy}.
    The other invariant curves \eqref{curve1xy} and \eqref{curve3xy} with the fixed points deleted consist of components that either are contained in the region where $P>0$ or serve as a component of the stable manifold for one of the saddle point singularities at $(x,y)=(-\sqrt{6},\frac{5\sqrt{6}}{12})$ and $(0,\frac{\sqrt{6}}{6})$.
    In particular, we have verified that the conclusion of Theorem \ref{mainthm:Ricpos} holds for initial conditions lying on any of the curves \eqref{curve1xy}--\eqref{curve4xy}.
    
    Due to this fact, below we only work with initial conditions which lie in the complement to the union of these invariant curves.
    Within the interior of the space of admissible metrics, $M=\{(x,y): y > 0 \text{ and } 2xy+6y^2 > 1\}$, this complement is partitioned into the following connected components:
    \begin{equation}\label{regions}
    \begin{aligned}
        A &= \{(x,y)\in M : 3x^{2}+8xy+2 > 0 \text{ and } 2x^2 + 6xy + 3 < 0 \},  \\
        B &= \{(x,y)\in M : x < -\sqrt{2}, 3x^{2}+8xy+2 > 0, \text{ and } 2x^2 + 6xy + 3 > 0 \},   \\
        C &= \{(x,y)\in M : 3x^{2}+8xy+2 < 0  \text{ and } 2x^2 + 6xy + 3 < 0 \},   \\
        D &= \{(x,y)\in M : 3x^{2}+8xy+2 < 0 \text{ and } 2x^2 + 6xy + 3 > 0 \},   \\
        E &= \{(x,y)\in M : -\sqrt{2} < x < 0 \text{ and } 3x^{2}+8xy+2 > 0 \},   \\
        F &= \{(x,y)\in M : x > 0 \}.   
    \end{aligned}
    \end{equation}
    These components are illustrated in Figure \ref{fig:regions}.
    Because they are bounded by invariant curves, they are forward-invariant for the flow of \eqref{eq:sys_xy}.
    Thus, it suffices to prove Theorem \ref{mainthm:Ricpos} for solutions whose initial conditions lie in each of these regions.

    \begin{figure}[H]
        \centering
        \includegraphics[width=0.5\linewidth]{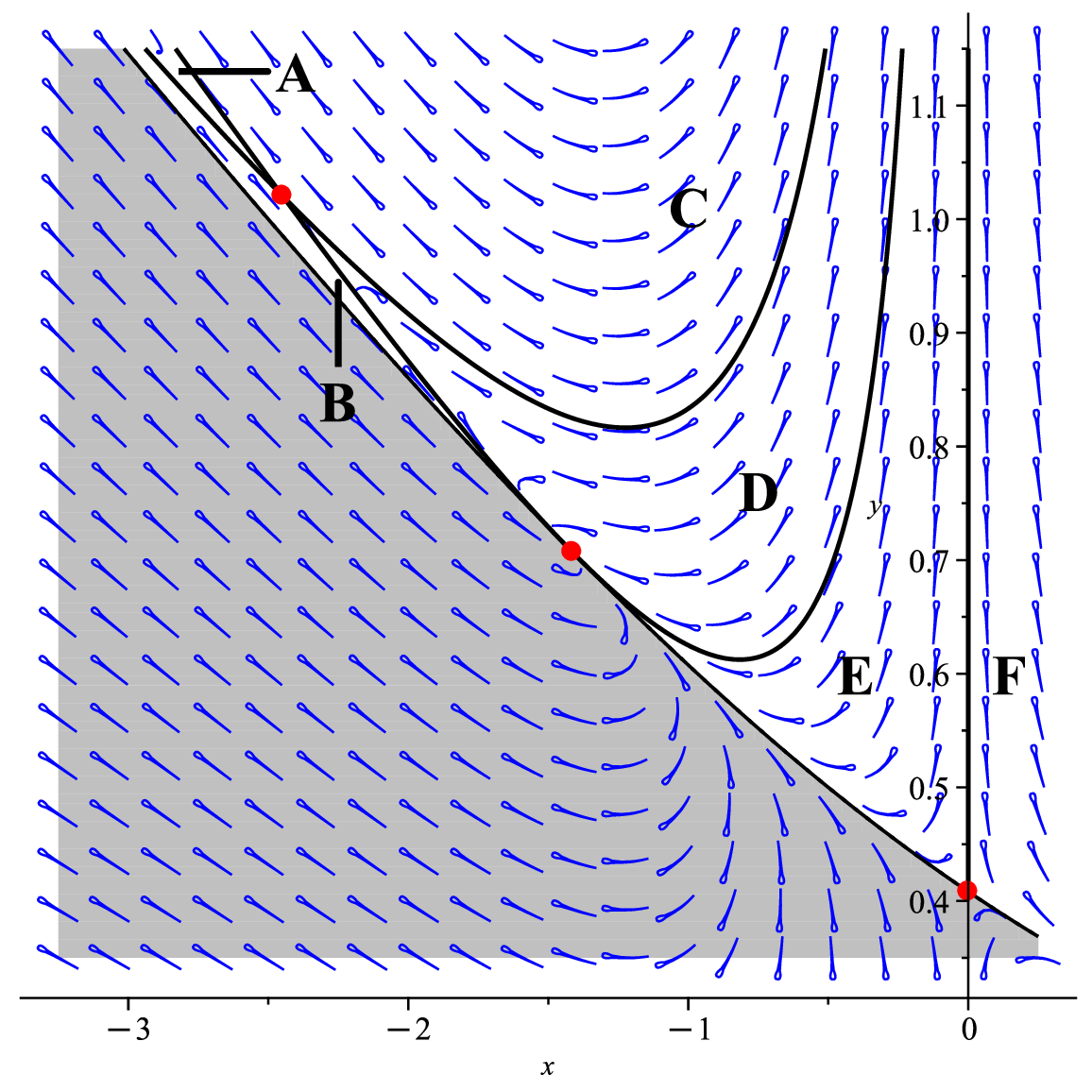}
        \caption{Regions $A$--$F$ defined in \eqref{regions}.}
        \label{fig:regions}
    \end{figure}

    \begin{remark}\label{rem:compact}
        Note that by Lemma \ref{lem:Pinc}, System \eqref{eq:sys_xy} has no cyclic solutions, as such a solution would necessarily have $P'(t) \equiv 0$, and this equality only holds on the unbounded curves \eqref{curve2xy} and \eqref{curve4xy}.
        Consequently by the Poincar\'e-Bendixson theorem, because the fixed points of the system consist of a source at $(x,y)=(-\sqrt{2},\frac{\sqrt{2}}{2})$ and saddle points at $(x,y)=(-\sqrt{6},\frac{5\sqrt{6}}{12})$ and $(0,\frac{\sqrt{6}}{6})$, the only solutions which are contained in a compact region are the fixed points and solutions on the curves which serve as the stable manifolds for the saddle points.
        As stated above, these stable manifolds lie on curves \eqref{curve1xy} or \eqref{curve3xy}.
        Therefore, because we only need to work with initial conditions which do not lie on these invariant curves, we may assume all solutions analyzed below are unbounded.
    \end{remark}

    It can be verified with Maple's IsEmpty function that the regions $B$, $D$, and $E$ are contained in the region where $P>0$.
    Thus, we have the following:
    
    \begin{proposition}
        If $(x(t),y(t))$ is a solution of System \eqref{eq:sys_xy} whose initial condition lies in region $B$, $D$, or $E$ from \eqref{regions}, then the solution has $\Ric>0$ for all $t$.
    \end{proposition}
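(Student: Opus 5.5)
The argument combines two facts: the regions $B$, $D$, $E$ are forward-invariant, and each is contained in $\{P>0\}$. By Lemma~\ref{lem:ISTS_P>0}, the condition $P>0$ is equivalent to $\Ric>0$.

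First, forward invariance. Each of $B$, $D$, $E$ is a connected component of $M$ minus the union of the invariant curves \eqref{curve1xy}--\eqref{curve4xy}, the fixed points, and the relevant pieces of the lines $x=-\sqrt{2}$, $x=0$. Since we are working with initial conditions off these curves, uniqueness of solutions to System~\eqref{eq:sys_xy} means a trajectory can never cross the bounding invariant curves. A solution starting in one of these regions therefore stays in it for all forward time on which it is defined.

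The lines $x=-\sqrt{2}$ used to separate $B$ from $E$ are not themselves invariant, so a trajectory could pass from $E$ into $B$ or the reverse. This causes no difficulty, because both regions lie in $\{P>0\}$. It is enough to use the weaker invariance of the union $B\cup E\cup\{x=-\sqrt2\}\cap M$, which is bounded only by invariant curves. Alternatively, one can bypass region bookkeeping entirely: Lemma~\ref{lem:Pinc} already says $\{P>0\}$ is forward-invariant. So it suffices that the initial condition satisfies $P>0$.

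Second, containment. I would show $B\cup D\cup E\subset\{P>0\}$ via Remark~\ref{rmk 1}. On admissible points, $P>0$ is equivalent to $y<y_2(x)$, where $y_2(x)=-\frac{x}{3}-\frac{1}{2x}+\frac{\sqrt{3}}{2x^2}$. It remains to compare $y_2$ with the boundary curves of each region.

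In $D$ and $E$ the relevant upper constraint on $y$ comes from $3x^2+8xy+2$ or $2x^2+6xy+3$. Since $x<0$, each of these inequalities becomes an explicit upper bound on $y$: $y<-\frac{3x^2+2}{8x}$ or $y<-\frac{2x^2+3}{6x}$. One then checks one-variable inequalities of the form $-\frac{3x^2+2}{8x}\le y_2(x)$ on the appropriate $x$-interval, and likewise for the other bound. These reduce to polynomial inequalities in $x$ after clearing denominators. For $B$ (where $x<-\sqrt2$) the same comparison is made with whichever bound is active. This is exactly the semialgebraic emptiness claim the paper certifies with IsEmpty: the sets $B\cap\{P\le0\}$, $D\cap\{P\le 0\}$, and $E\cap\{P\le0\}$ are empty.

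Finally, combine the two steps. A solution starting in $B$, $D$ or $E$ has $P(0)>0$, so by Lemma~\ref{lem:Pinc} it has $P(t)>0$ for all $t\ge0$, and by Lemma~\ref{lem:ISTS_P>0} it has $\Ric>0$ throughout.

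The main obstacle is the containment step. The curves involved are tangent to or meet $\{P=0\}$ near the fixed points, for instance at $(-\sqrt6,\tfrac{5\sqrt6}{12})$ on $2x^2+6xy+3=0$. The one-variable inequalities may therefore be tight there, and the region boundaries (non-strict versus strict) must be handled carefully. I would rely on the Maple emptiness computation, cross-checked by hand via the factorizations of the differences $y_2(x)-(\text{boundary})$.
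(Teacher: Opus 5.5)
Your proof is correct and is essentially the paper's argument: the containment $B\cup D\cup E\subset\{P>0\}$ (which the paper certifies with IsEmpty and you reduce, via Remark~\ref{rmk 1}, to comparing the region boundaries with $y_2$), followed by forward-invariance of $\{P>0\}$ from Lemma~\ref{lem:Pinc} and then Lemma~\ref{lem:ISTS_P>0}. Two side remarks are inaccurate but harmless: trajectories cannot pass between $B$ and $E$, since curve~\eqref{curve3xy} is tangent to the boundary curve~\eqref{curve1xy} at the source $(-\sqrt{2},\tfrac{\sqrt{2}}{2})$, so $M$ has no points on $x=-\sqrt{2}$ below curve~\eqref{curve3xy}; and nothing is tight near $(-\sqrt{6},\tfrac{5\sqrt{6}}{12})$, because $P=54-2x^2(2x^2+6xy+3)^2$ equals $54$ on curve~\eqref{curve4xy}, and $B\cup D\cup E$ lies in $\{x<0,\ 2x^2+6xy+3>0\}$, i.e.\ strictly below $y=-\tfrac{x}{3}-\tfrac{1}{2x}=y_2(x)-\tfrac{\sqrt{3}}{2x^2}$.
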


    Our next goal is to prove Theorem \ref{mainthm:Ricpos} for the remaining solutions that approach the line $x=0$, i.e. those contained in $C$ and $F$.
    We begin with the following:

    \begin{lemma}\label{lemma D}
        Suppose $(x(t),y(t))$ is a solution of System \eqref{eq:sys_xy} whose initial condition lies in region $C$ or $F$ from \eqref{regions}.
        Then $|x(t)|$ is monotonically decreasing for all $t$ and $x(t)\to 0$ and $t$ increases, and $y(t)$ is monotonically increasing for sufficiently large values of $t$ and $y(t) \to \infty$ as $t$ increases.
    \end{lemma}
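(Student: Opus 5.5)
The plan is to read off monotonicity of $|x|$ from the factored equation for $x'$. From System \eqref{eq:sys_xy},
\[
\frac{d}{dt}\ln|x| = -6\left(x^3 + 4x + y(3x^2+6)\right) =: -6R(x,y),
\]
wherever $x\neq 0$. Regions $C$ and $F$ are forward-invariant and avoid the invariant line $x=0$, so $x(t)$ keeps its sign along the solution.

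\textbf{Step 1: $R>0$ on $C$ and $F$.} On $F$ we have $x>0$ and $y>0$, so every term of $R$ is positive and $R>0$. On $C$ we have $x<0$, and I would check $R>0$ with Maple's IsEmpty function on the semialgebraic set $C\cap\{R\le 0\}$. As a sanity check, the inequality $2x^2+6xy+3<0$ in $C$ forces $y>\frac{2x^2+3}{6|x|}$. Then
\[
y(3x^2+6) \ge \frac{(2x^2+3)(3x^2+6)}{6|x|},
\]
which dominates $|x^3+4x|$. Hence $|x|$ is strictly decreasing, so $|x(t)|\le |x(0)|$.

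\textbf{Step 2: $x(t)\to 0$.} I need a uniform positive lower bound on $R$ along the trajectory. Since $|x|\le|x(0)|$ and $2xy+6y^2>1$, the variable $y$ is bounded below by a positive constant $c$ depending only on $x(0)$. This is because $6y^2+2|x(0)|y\ge 6y^2+2xy>1$.
\begin{itemize}
\item On $F$, this gives $R\ge 6y\ge 6c$, so $|x|$ decays exponentially.
\item On $C$, I would instead get a lower bound on $R$ in terms of $|x|$ from the estimate above, for instance $R\ge 3x^2+\text{const}$. I would then argue by contradiction: if $|x|\to L>0$, then $\frac{d}{dt}\ln|x|$ is bounded above by a negative constant, which is impossible for a bounded positive quantity.
\end{itemize}

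\textbf{Step 3: behavior of $y$.} Rewrite
\[
y' = y^2(36-18x^2) + y(18x-6x^3) + 6x^2 - 6.
\]
Once $|x|$ is small (say $|x|<1$), the $y^2$ coefficient is at least $18$. So there is a threshold $Y$ such that $y'>0$ whenever $y>Y$ and $|x|$ is small. The set $\{y>Y,\ |x|<\text{small}\}$ is therefore forward-invariant, and $y$ increases monotonically there.

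It remains to show the trajectory enters this set. By Remark \ref{rem:compact} the solution is unbounded, and $x$ is bounded, so $\limsup y=\infty$. Choose a time when $|x|$ is already small and $y>Y$. From then on $y$ is increasing. Since $y'\ge 18y^2-O(y)$, the limit of $y$ is infinite.

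\textbf{Main obstacle.} The hardest step is the positivity of $R$ on $C$, together with a uniform lower bound on it, which I expect to delegate to Maple. A secondary subtlety is excluding a trajectory that lingers near the saddle $(0,\frac{\sqrt6}{6})$ with $y$ bounded. That case is excluded by unboundedness, or equivalently because converging to that saddle would place the solution on its stable manifold, which lies on the curves we have excluded.
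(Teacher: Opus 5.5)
\textbf{Gap: Step 2 assumes the solution exists for all time.} You conclude $x(t)\to 0$ from $\frac{d}{dt}\ln|x|=-6R\le -k<0$. On $F$ this is the exponential-decay argument, and on $C$ it is the claim that this is ``impossible for a bounded positive quantity''. Both inferences need an infinite time interval, but here the maximal existence time is finite. On the invariant line $x=0$ the system reduces to $y'=36y^2-6$, which blows up in finite time. Your own Step 3 bound $y'\ge 18y^2-O(y)$ likewise forces every solution in $C$ or $F$ to blow up at some finite $T_{\max}$. On $[0,T_{\max})$, the estimate $|x(t)|\le |x(0)|e^{-kt}$ is compatible with $|x(t)|\to L>0$, so Step 2 proves nothing. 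Step 3 needs $|x|$ to be eventually small, so it fails along with Step 2.

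The paper avoids this by arguing without the time variable and in the opposite order. First, unboundedness (Remark~\ref{rem:compact}) together with the Maple check that $\{y>2,\ y'>0\}$ is forward-invariant shows that $y$ eventually increases to $\infty$, with no information about $\lim x$. Then, if $x\to L\neq 0$, the quantity $\frac{dy}{dx}\cdot\frac1y$ tends to $\frac{L^2-2}{L(L^2+2)}$ and so is bounded by some $M$. Hence $|\ln y(t_2)-\ln y(t_1)|\le M|x(t_2)-x(t_1)|$ stays bounded, because $x$ is monotone and bounded, which contradicts $y\to\infty$. Your idea can be salvaged on $F$ by proving $\int_0^{T_{\max}}R\,dt=\infty$ instead of $R\ge k$. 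There $R\ge 6y$, and $\frac{d}{dt}\ln y\le 36y+C$ while $x$ is bounded and $y$ is bounded below. Unboundedness of $y$ then forces $\int_0^{T_{\max}}y\,dt=\infty$.

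\textbf{Secondary issue on $C$.} The bound you get from $2x^2+6xy+3<0$ is $R>\frac{6-x^2}{2|x|}$, which is positive only for $x^2<6$. Your ``sanity check'' is therefore false on part of $C$, since $C$ also contains points with $x<-\sqrt6$, e.g.\ $(-10,10)$. There the binding constraint is $3x^2+8xy+2<0$, which gives $R>\frac{(x^2-2)(x^2-6)}{8|x|}$. Both bounds vanish at $|x|=\sqrt6$. No lower bound in terms of $|x|$ alone can be positive there, because the saddle $(-\sqrt6,\frac{5\sqrt6}{12})$ lies on $\partial C$ and has $R=0$. So even on an infinite time interval, your contradiction argument would not exclude $|x|\to\sqrt6$, and the suggested bound $R\ge 3x^2+\text{const}$ is unjustified. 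Positivity of $R$ on $C$ itself is fine via Maple, as in the paper.
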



    \begin{proof}
        If the initial condition lies in region $F$ (where $x>0$), then $x(t)>0$ for all $t$, and thus $x'(t) = -6x\left(x^3 + 3x^2 y + 4x + 6y\right)<0$ for all $t$.
        If instead the initial condition lies in region $C$, then it can be verified with Maple's IsEmpty function that $x'(t)>0$ for all $t$.
        Therefore, $|x(t)|$ is monotonically decreasing for all $t$ in both cases.
        
        Next, we show that $y(t)$ eventually increases monotonically.
        By Remark \ref{rem:compact}, we may assume that our solution $(x(t),y(t))$ is unbounded.
        Since $|x|$ is bounded, we have that $|y|$ is unbounded.  
        Because $y>0$ by assumption, the solution must satisfy $y'(t) > 0$ for some $t$.  
        Using Maple's IsEmpty function, we can verify if $y>2$, then $y''(t)>0$.
        Thus the region where $y>2$ and $y'(t)>0$ is forward-invariant, and because $y$ is unbounded, our solution must eventually enter and remain in this region.
        Hence, $y(t)$ increases monotonically for all sufficiently large values of $t$, and because $y$ is unbounded, we have $y\to \infty$ as $t$ increases.



        Consequently, we may view $y$ as an invertible function of $x$ for sufficiently large values of $t$.
        Thus, considering $\frac{dy}{dx} = \frac{y'(t)}{x'(t)}$, we have by \eqref{eq:sys_xy} that 
        \[ 
            \frac{dy}{dx} = \frac{-6x^3 y - 18x^2 y^2 + 6x^2 + 18x y + 36y^2 - 6}{-6x\left(x^3 + 3x^2 y + 4x + 6y\right)}. 
        \]
        Using that $|x|$ is bounded, we get that
        \begin{equation}\label{eqn A}
             \lim_{y \to \infty}\frac{dy}{dx} \cdot \frac{1}{y} = \lim_{y \to \infty}\frac{x^2-2}{x\left(x^2+2\right)}. 
        \end{equation}
        Because $|x(t)|$ is bounded and monotonic, it must converge to some finite value as $t$ increases (equiv. $y\to \infty$).
        For the sake of contradiction, assume this limit is non-zero.
        Then the limit \eqref{eqn A} converges to some finite value, and thus there exists a positive number $M$ such that, for sufficiently large values of $t$, we have
        \[
            \left | \frac{dy}{dx} \cdot \frac{1}{y} \right | < M.
        \]
        Because $|x(t)|$ is bounded, the above inequality implies that $|y(t)|$ must also be bounded, which is a contradiction.
        Thus, we have $x\to 0$ as $t$ increases.
    \end{proof}
    

    We are now ready to prove the following case of Theorem \ref{mainthm:Ricpos}:

    \begin{theorem}\label{thm:P>0firstcase}
    Suppose $(x(t),y(t))$ is a solution of System \eqref{eq:sys_xy} whose initial condition lies in region $C$ or $F$ from \eqref{regions}.
    Then the solution will satisfy $\Ric>0$ for all sufficiently large values of $t$.
\end{theorem}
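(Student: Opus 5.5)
The plan is to combine Lemma~\ref{lemma D} with the description of the region $P>0$ in Remark~\ref{rmk 1}. By Lemma~\ref{lem:ISTS_P>0} and Remark~\ref{rmk 1}, on admissible metrics $\Ric>0$ holds if and only if $y<y_2(x)$. Multiplying by $x^2$, this condition reads
\[
x^2y \;<\; x^2 y_2(x) \;=\; -\tfrac{x^3}{3}-\tfrac{x}{2}+\tfrac{\sqrt3}{2}.
\]
By Lemma~\ref{lemma D}, $x(t)\to 0$, so the right-hand side tends to $\sqrt3/2>0$. It therefore suffices to show that $x(t)^2y(t)\to 0$. We know $y\to\infty$ and $x\to 0$, so the issue is one of rates. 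The heuristic from \eqref{eqn A} is $\frac{d\ln y}{d\ln|x|}\approx -1$ near $x=0$, which suggests $y\sim K/|x|$. I will make this rigorous by showing that $w=xy$ stays bounded.

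The key step is to compute the evolution of $w=xy$ along System~\eqref{eq:sys_xy}. Expanding $w'=x'y+xy'$, the dangerous terms $\pm 36xy^2$ cancel, and I expect to obtain
\[
w'(t) = -x\left(36w^2+12x^2w+6w+6-6x^2\right).
\]

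I would then split into the two regions.

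\emph{Region $F$.} Here $x>0$ and $y>0$, so $w>0$. Every term inside the parentheses is then positive once $x^2<1$, which holds eventually because $x\to0$ monotonically. Hence $w'<0$ eventually, and $w$ is bounded above by its value at some fixed time.

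\emph{Region $C$.} Here $x<0$, so $w<0$, and we need $w$ bounded below. We have $w'=|x|\,(36w^2+12x^2w+6w+6-6x^2)$. Since $|x|$ is bounded (it is monotonically decreasing), the quadratic $36w^2$ dominates the linear terms whenever $|w|$ exceeds some explicit constant $K_0$ depending only on $\sup|x|$. So $w'\ge 0$ whenever $w<-K_0$. Consequently $w(t)\ge \min\{w(t_0),-K_0\}$ for all later $t$.

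In both cases there is a constant $K$ with $|x(t)y(t)|\le K$ for all large $t$. Then
\[
x^2y=|x|\,|xy|\le K|x|\to 0,
\]
so eventually $x^2y<x^2y_2(x)$. Remark~\ref{rmk 1} then gives $P>0$, and Lemma~\ref{lem:ISTS_P>0} gives $\Ric>0$. By Lemma~\ref{lem:Pinc}, this persists for all later times.

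The main obstacle is the algebra for $w'$, specifically confirming the cancellation of the $xy^2$ terms; without it the boundedness argument fails. If the cancellation does not come out cleanly, I would instead take $w=x^2y$ directly and compare $w'$ against $x'$ to show that $w/|x|$ is bounded.
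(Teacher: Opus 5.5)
Your proof is correct, and it takes a different route from the paper. The identity you anticipated does hold. Expanding $x'y+xy'$ from \eqref{eq:sys_xy}, the $\pm 36xy^2$ terms cancel, and one gets
\[
w'=-12x^4y-36x^3y^2+6x^3-6x^2y-6x=-x\left(36w^2+12x^2w+6w+6-6x^2\right).
\]
With this, your sign argument in $F$ and your barrier argument in $C$ both go through as written.

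The paper instead reparametrizes the orbit by $x$, which requires the eventual monotonicity of $y$ and $y\to\infty$ from Lemma \ref{lemma D}. It shows that $\frac{dy}{dx}\cdot\frac{x}{y}\to -1$ and integrates the resulting differential inequality against $y=A|x|^{-(1+\epsilon)}$. This gives only $y=O(|x|^{-(1+\epsilon)})$, which suffices because $y_2(x)\sim\frac{\sqrt{3}}{2}x^{-2}$ as $x\to 0$.

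Your Riccati-type equation for $xy$ buys the sharper rate $y=O(|x|^{-1})$ with an elementary argument. It uses only $x\to 0$ and boundedness of $|x|$. It does not need $y\to\infty$, the monotonicity of $y$, the limit of a quotient along the orbit, or a comparison ODE. The paper's route is more generic: it needs no algebraic cancellation to be discovered, just the leading-order asymptotics of the vector field.

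Your final appeal to Lemma \ref{lem:Pinc} is unnecessary. Since $K|x(t)|\to 0$ while $x^2y_2(x)\to\frac{\sqrt{3}}{2}$, you already get $x^2y<x^2y_2(x)$ for all sufficiently large $t$ directly.
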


\begin{proof}
    
    
    By Lemma \ref{lemma D}, $|x(t)|$ converges monotonically to $0$ and $y(t)$ is unbounded and eventually grows monotonically as $t$ increases.
Thus, we may again treat $y$ as a function of $x$, and we may consider
    \begin{align*}
        \frac{dy}{dx}\cdot \frac{x}{y} 
        = \frac{y'(t)}{x'(t)}\cdot \frac{x}{y} 
        &= \frac{ (3x^2-6)y^2 + (x^3 - 3x)y + 1 - x^2}{ (3x^2+6)y^2 + (x^3  + 4x)y }.
    \end{align*}
    Because $y\to\infty$ as $x\to 0$ (equiv. as $t$ increases), we get that
    \[
        \lim_{x \to 0} \left|\frac{dy}{dx}\cdot \frac{x}{y}\right| \leq  1.
    \]
    Hence given $\epsilon>0$, for small enough values of $x$, we have
    \[
        \left|\frac{dy}{dx}\right| < (1+\epsilon)\frac{y}{|x|}
    \]
    Now, solutions of the differential equation $\frac{dy}{dx} = -(1+\epsilon)\frac{y}{x}$ are given by $y = A|x|^{-(1+\epsilon)}$ for some constant $A$.
    Thus, for sufficiently large values of $t$, the solution $(x(t),y(t))$ satisfies $y(x) < A|x|^{-(1+\epsilon)}$.
    In particular, choosing $\epsilon <1$, we have $A|x|^{-(1+\epsilon)}<y_2(x)$ for sufficiently small values of $x$, where $y_2$ is the curve described in Remark \ref{rmk 1}.
    Thus, for large enough values of $t$, we get that $(x(t),y(t))$ satisfies $y<y_3<y_2$, and hence $P>0$, which implies $\Ric>0$.
%
%
\end{proof}


It remains to show Theorem \ref{mainthm:Ricpos} holds in region $A$.
First, we show the following:

\begin{lemma}\label{lemma E}
    Suppose $(x(t),y(t))$ is a solution of System \eqref{eq:sys_xy} whose initial condition lies in region $A$ from \eqref{regions}.
    Then $x(t)$ decreases monotonically for all sufficiently large values of $t$, and $x(t) \to -\infty$ as $t$ increases.
\end{lemma}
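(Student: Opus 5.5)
The plan is to show first that $x'(t)<0$ throughout region $A$, and then to use the unboundedness from Remark \ref{rem:compact} to force $x(t)\to-\infty$.

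\emph{Step 1: the sign of $x'$ on $A$.} On $A$ we have $x<0$: the curve $2x^2+6xy+3=0$ only exists for $x<0$, and $2x^2+6xy+3<0$ with $y>0$ forces $x<0$. So the prefactor $-6x$ in
\[
x'(t)=-6x\left(x^3+3x^2y+4x+6y\right)
\]
is positive, and the sign of $x'$ equals the sign of $x^3+3x^2y+4x+6y$. I would check with Maple's IsEmpty function that the semialgebraic set
\[
\{(x,y)\in M : 3x^2+8xy+2>0,\ 2x^2+6xy+3<0,\ x^3+3x^2y+4x+6y\geq 0\}
\]
is empty, exactly as was done for region $C$ in Lemma \ref{lemma D}. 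This would give $x'(t)<0$ on all of $A$, hence for all $t$, since $A$ is forward-invariant.

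If this set turns out not to be empty, so that $x'$ changes sign inside $A$, the fallback is as follows. Verify with IsEmpty that on $A\cap\{x'\le 0\}$ one has $x''(t)<0$ wherever $x'=0$ (compute $x''$ by the chain rule from \eqref{eq:sys_xy}). This makes $A\cap\{x'<0\}$ forward-invariant. Then argue that a solution cannot stay in $A\cap\{x'\ge 0\}$ forever: there $x$ would be increasing but bounded above by $0$, and the step-2 compactness argument below applies to that part too. This gives the ``for sufficiently large $t$'' in the statement. I expect this sign verification to be the main obstacle; everything else is soft.

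\emph{Step 2: $x\to-\infty$.} Once $x(t)$ is monotonically decreasing for $t\ge t_0$, suppose for contradiction that it is bounded below. Then $x(t)\in[x_\infty,x(t_0)]$ with $x(t_0)<0$, so $x$ stays bounded away from $0$. On $A$ the coordinate $y$ is trapped between the two invariant curves
\[
y=\frac{2x^2+3}{6|x|} \quad\text{and}\quad y=\frac{3x^2+2}{8|x|}.
\]
Indeed $2x^2+6xy+3<0$ gives $y>(2x^2+3)/(6|x|)$, and $3x^2+8xy+2>0$ gives $y<(3x^2+2)/(8|x|)$. Both bounds are continuous and bounded on $[x_\infty,x(t_0)]$. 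Hence the trajectory lies in a compact subset of $M$ for $t\ge t_0$. This contradicts Remark \ref{rem:compact}, since our initial condition is off the invariant curves and is not a fixed point; the fixed points do not lie in the open region $A$. Therefore $x(t)$ is unbounded below, and monotonicity gives $x(t)\to-\infty$.
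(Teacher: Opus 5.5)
\textbf{Step 1's main claim is false.} The set you propose to test with IsEmpty is not empty: $x'$ genuinely changes sign inside $A$. Write $f=x^3+3x^2y+4x+6y=x^3+4x+(3x^2+6)y$.

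On the upper boundary curve $3x^2+8xy+2=0$ of $A$, substitution gives $f=-\frac{(x^2-2)(x^2-6)}{8x}>0$ for $x<-\sqrt{6}$. So $x'>0$ on the part of $A$ next to that curve. This has to happen, because that curve is the stable manifold of the saddle at $(-\sqrt{6},\tfrac{5\sqrt{6}}{12})$, and the flow along it moves toward increasing $x$.

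In fact $\{x'=0\}\cap A$ is the curve $y=-\frac{x(x^2+4)}{3(x^2+2)}$. For every $x<-\sqrt{6}$ it lies strictly between the two boundary curves; the gaps to the two boundaries are proportional to $x^2-6$ and $(x^2-2)(x^2-6)$. So your fallback is not optional; it is the proof.

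\textbf{The fallback is correct and is essentially the paper's argument.} At a point with $x'=0$ and $x\neq 0$ you have $x''=-6x(3x^2+6)\,y'$. On the curve above, a direct computation gives $y'=-\frac{2(x^2-2)(x^2-6)}{(x^2+2)^2}<0$. Hence $x''<0$ wherever $x'=0$ in $A$, and $A\cap\{x'<0\}$ is forward-invariant.

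This is exactly the content of the paper's Maple check that in $A$, $y'\ge 0$ forces $x'<0$ and $x''<0$. The two arguments differ only in how they get into $\{x'<0\}$ and finish:
\begin{itemize}
\item The paper enters $\{x'<0\}$ directly from unboundedness, then asserts that an unbounded solution in $A$ must have $x\to-\infty$.
\item You rule out $x'\ge 0$ forever by a compactness contradiction, and your Step 2 spells out the paper's terse final claim.
\end{itemize}
Both routes are fine, and yours is a bit more explicit.

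\textbf{One small repair.} In the case $x'\ge 0$ forever, ``$x$ is bounded above by $0$'' does not give compactness, since the upper bound $(3x^2+2)/(8|x|)$ blows up as $x\to 0^-$. You need that the two inequalities defining $A$ are compatible only when $x^2>6$. Then $x<-\sqrt{6}$ on $A$, so $x(t)\in[x(0),-\sqrt{6}]$, and the trajectory stays in a compact subset of $M$.
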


\begin{proof}
    %
    By Remark \ref{rem:compact}, we know that our solution is unbounded. 
    Within the region $A$, because it is bounded by the invariant curves $3x^{2}+8xy+2 = 0$ and $2x^2 + 6xy + 3 = 0$ (with $x<-\sqrt{6}$), a solution contained in $A$ is unbounded only if $x(t)\to -\infty$ and $y(t) \to \infty$ as $t$ increases.
    In particular, there exists $t$ for which $x'(t) < 0$ and $y'(t) > 0$.
    Furthermore, it is verifiable with Maple's IsEmpty function that in region $A$, if $y'(t)\geq 0$, then $x'(t)<0$ and $x''(t)<0$.
    Thus the subset of $A$ for which $x'(t)<0$ is forward-invariant.
\end{proof}



    Finally, we prove the last case of Theorem \ref{mainthm:Ricpos}:
    
    \begin{theorem}\label{thm:enter-P-positive-xy}
        Suppose $(x(t),y(t))$ is a solution of System \eqref{eq:sys_xy} whose initial condition lies in region $A$ from \eqref{regions}.
        Then the solution will satisfy $\Ric>0$ for all sufficiently large values of $t$.
    \end{theorem}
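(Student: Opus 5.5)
The plan is to study the thin strip that region $A$ squeezes into as $x\to-\infty$, and to show the solution eventually lies in the part of that strip where $P>0$. In region $A$ we have $x<0$. There the two boundary inequalities read
\[
-\tfrac{x}{3}-\tfrac{1}{2x} \;<\; y \;<\; -\tfrac{3x}{8}-\tfrac{1}{4x}.
\]
The lower boundary is exactly the invariant curve \eqref{curve4xy}, $2x^2+6xy+3=0$. By Remark \ref{rmk 1}, $P>0$ if and only if $y<y_2(x)$. Here $y_2(x)$ is the curve \eqref{curve4xy} shifted up by $\tfrac{\sqrt3}{2x^2}$. So Ricci-positivity in $A$ means that the solution hugs the curve \eqref{curve4xy} to within $O(x^{-2})$.

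To measure this, I introduce
\[
u=2x^2+6xy+3<0 \quad\text{on } A,
\]
and the rescaled quantity $w=xu>0$. Since $y-(-\tfrac{x}{3}-\tfrac{1}{2x})=u/(6x)$, the condition $y<y_2$ is equivalent to
\[
w<3\sqrt3 .
\]
So it suffices to show that $w(t)<3\sqrt3$ for all large $t$.

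Because \eqref{curve4xy} is invariant, $u'(t)=K(x,y)\,u$ for some polynomial $K$. I will compute $K$ from \eqref{eq:sys_xy} via $u'=x'(4x+6y)+6xy'$. This gives
\[
\frac{w'}{w}=K+\frac{x'}{x}.
\]
By Lemma \ref{lemma E}, $x(t)\to-\infty$ monotonically, and $y$ is trapped between the two curves, so $y\sim -x/3$. Substituting $y=-\tfrac{x}{3}-\tfrac1{2x}+\tfrac{u}{6x}$ and expanding in powers of $1/x$ should show that $K+x'/x$ is negative and of order $x^2$ (note $x'\approx -3x^2$ there). I would verify the precise inequality with Maple's IsEmpty: for some threshold $x_0<0$, the set
\[
\{(x,y)\in A : x<x_0,\; w\ge 3\sqrt3/2,\; w'\ge 0\}
\]
is empty.

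With this in hand the argument finishes in two steps. First, once $x(t)<x_0$ (which happens by Lemma \ref{lemma E}), $w$ is strictly decreasing whenever $w\ge 3\sqrt3/2$. In fact it decays at a rate comparable to $x^2$, while $x$ itself diverges. Hence $w$ must drop below $3\sqrt3/2$ in finite time. Second, the set $\{w<3\sqrt3/2\}$ is forward-invariant for $x<x_0$, because on its boundary $w'<0$. Together these give $w<3\sqrt3$ for all large $t$, so $P>0$, and Lemma \ref{lem:ISTS_P>0} yields $\Ric>0$. Since Lemma \ref{lem:Pinc} already shows $\{P>0\}$ is forward-invariant, the second step could alternatively be replaced by that lemma once $P>0$ occurs once.

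The main obstacle is making the asymptotic sign of $w'$ rigorous uniformly across the strip, since $u$ ranges over a region of width $O(x)$ near the upper curve. If $w'$ fails to be negative near the upper boundary \eqref{curve3xy}, I would first use a cruder quantity to show the solution leaves any fixed neighborhood of that curve. For this I would use the analogous rescaled distance to \eqref{curve3xy}, which should grow because \eqref{curve3xy} is the repelling, stable-manifold boundary of the saddle. Only after that would I apply the $w$ estimate.
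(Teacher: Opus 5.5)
Your approach is correct, and it takes a genuinely different route from the paper's.

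The paper reparametrizes the trajectory by $x$ using Lemma~\ref{lemma E}. It factors $dP/dx=-12xP_1^2P_2$, bounds $P_1=u$ and $P_2$ on $\{P\le 0\}$ by their values on $y=y_2(x)$, and integrates the resulting bound $dP/dx\le c/x$ ($c>0$) to get $P\ge C_1\ln|x|+C_2$.

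You instead exploit the invariance of curve~\eqref{curve4xy} and follow the rescaled distance $w=xu$ in time. Carrying out your computation gives
\[
u'=6xu(1-u),\qquad \frac{w'}{w}=3x+\frac{18}{x}-9w-\frac{6w}{x^2}.
\]
In fact $P=54-2w^2$ identically. So $w$ is simply the natural variable for the paper's $P$, and your criterion $w<3\sqrt3$ is literally $P>0$. On $A$ one has $x<-\sqrt6$ and $w>0$, so every term on the right is negative. Consequently the ``main obstacle'' you anticipate does not occur. You need no Maple check, no threshold $x_0$, no separate argument near curve~\eqref{curve3xy}, and no appeal to Lemma~\ref{lem:Pinc} for forward invariance. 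Your route gives a sharper, fully explicit conclusion ($w\to0$, i.e.\ $P\to 54$) with less machinery. The paper's argument reuses the already-computed $P'$ without any new identity, but it only yields logarithmic growth of $P$ while $P\le 0$.

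Two details in your write-up need fixing.

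First, the rate is misstated. You have $|w'/w|\ge 3|x|+18/|x|$, which is of order $|x|$ near curve~\eqref{curve4xy}, not of order $x^2$. Near curve~\eqref{curve3xy}, where $w\approx |x|^3/4$, it is of order $|x|^3$.

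Second, do not close with a decay estimate in $t$ alone. Solutions in $A$ reach $x=-\infty$ in finite time: they approach curve~\eqref{curve4xy}, on which $x'=18-3x^2$. So a bound $w'/w\le -c$ does not by itself force $w<3\sqrt3$ before the solution ceases to exist. Within your framework the clean finish is
\[
\frac{d}{dt}\ln\bigl(|x|\,w\bigr)=\frac{36}{x}-12w\Bigl(1+\frac{1}{x^2}\Bigr)<0 \quad\text{on } A,
\]
so $w\le C/|x|\to 0$ by Lemma~\ref{lemma E}. This plays the same role as the paper's choice to parametrize by $x$ rather than by $t$.
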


    \begin{proof}
        By Lemma \ref{lemma E}, $x(t)$ eventually decreases monotonically, so we may regard $y(t)$ and $P(t)$ as functions of $x$.
        From \eqref{eq:sys_xy} and \eqref{eq:Pxy}, we get that
        \begin{equation}\label{eq:dP_dx_exact}
            \begin{aligned}
                \frac{dP}{dx}&=
                -\frac{72x^{2}\big(6xy+2x^{2}+3\big)^{2}\big(x^{3}+3x^{2}y+2y+2x\big)}
                {6x\big(x^{3}+3x^{2}y+4x+6y\big)}\\
                &= -12x\,(2x^2+6xy+3)^2
                \left(1-\frac{2x+4y}{x^3+3x^2y+4x+6y}\right).
            \end{aligned}
        \end{equation}
        Suppose the solution initially satisfies $P\leq 0$.
        Defining 
        \[ 
            P_1 = 2x^2+6xy+3 \quad \text{ and } \quad P_2=1-\frac{2x+4y}{x^3+3x^2y+4x+6y},
        \]
        we will first estimate $P_1$ and $P_2$ in order to bound $\frac{dP}{dx}=-12x(P_1)^2 P_2$ above by a negative function of $x$, which corresponds to a positive lower bound on $\frac{dP}{dt}$ since $x(t)$ decreases monotonically.

        \begin{figure}[H]
            \centering
            \includegraphics[width=0.45\textwidth]{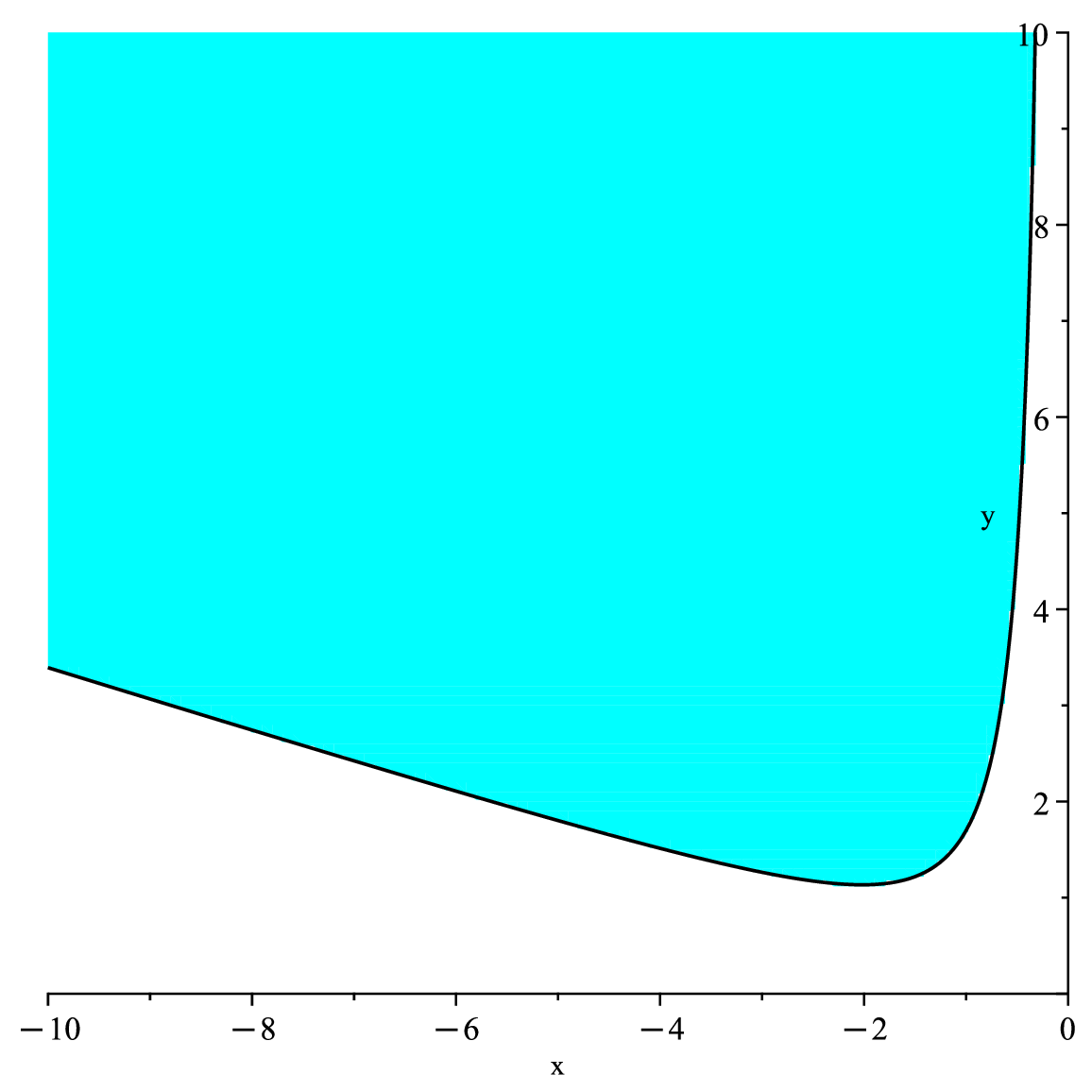}
            \caption{The region where $P\leq0$ is bounded by $y=y_2(x)$.}
            \label{fig:negativep}
        \end{figure}
        
        First notice that in the region where $P\leq 0$, for a fixed value of $x$, the minimal value of $y$ occurs on the curve where $P=0$, which is given by $y_2(x) = -\frac{x}{3} - \frac{1}{2x} + \frac{\sqrt{3}}{2x^2}$ in Remark \ref{rmk 1}; see Figure \ref{fig:negativep}.
        We also have that $\frac{\partial P_1}{\partial y}=6x<0$ in the region $A$.
        Thus for a fixed value of $x$, the maximal value of $P_1$ is attained on $y_2(x)$.
        Hence for all $x$,
        \begin{equation}\label{eq:P1}
            P_1 = 2x^2 + 6xy + 3 \leq \frac{3\sqrt{3}}{x} < 0.
        \end{equation}

        Next notice, because $x<-\sqrt{6}$ in region $A$, we have
        \[
            \frac{\partial P_2}{\partial y} = \frac{2x(x^2-2)}{(x^3+3x^2y+4x+6y)^2} < 0.
        \]
        So similarly, for a fixed value of $x$, the maximal value of $P_2$ is attained on $y_2(x)$.
        Hence for all $x$,
        \[
            P_2 = 1-\frac{2x+4y}{x^3+3x^2y+4x+6y} \leq - \frac{x^3-9\sqrt{3}x^2+6x-6\sqrt{3}}{3x^3+9\sqrt{3}x^2-18x+18\sqrt{3}}.
        \]
        Consequently, 
        \begin{equation}\label{eq:P2}
            \lim_{x\to-\infty} P_2 \leq -\frac{1}{3}.
        \end{equation}

        Combining \eqref{eq:P1} and \eqref{eq:P2}, for any sufficiently small $\epsilon>0$ and for sufficiently large values of $x$, we have 
        \[
            \frac{dP}{dx} \leq -12x \of{\tfrac{3\sqrt{3}}{x}}^2 \of{-\tfrac{1}{3}+\epsilon} = \frac{324 (1-3\epsilon)}{3x} < 0.
        \]
        Consequently, there exist constants $C_1, C_2$ such that $C_1>0$ and for sufficiently large values of $x$,
        \[P > C_1\ln |x|+C_2.\]
        Because $x\to -\infty$ as $t$ increases, we conclude that $P > 0$ for sufficiently large values of $t$.
    \end{proof}

\bibliographystyle{abbrv}
\bibliography{biblio}

\end{document}